\newtheorem{thrm}{Theorem}[section]
\newtheorem{lem}[thrm]{Lemma}
\newtheorem{cor}[thrm]{Corollary}
\theoremstyle{definition}
\newtheorem{definition}[thrm]{Definition}
\newtheorem{remark}[thrm]{Remark}
\newtheorem{example}[thrm]{Example}
\numberwithin{equation}{section}
\numberwithin{equation}{section}
\newcommand{\e}{{\mathrm{e}}}
\title[Minimal polynomials and the matrix exponential function]{An explicit expression for the minimal polynomial of the Kronecker product of matrices. Explicit
formulas for matrix logarithm and matrix exponential}
\author{\bf M. MOU\c{C}OUF}
\date{}
\keywords{$\mathcal{P}$-Canonical form, Powers, Linear recurrence sequences, Matrix exponential, Logarithm of a matrix, Kronecker product}
\begin{document}
\maketitle
\begin{center}
{\footnotesize Department of Mathematics, Faculty of Science, Chouaib Doukkali University,\\Morocco\\
Email: moucouf@hotmail.com}
\end{center}
\begin{abstract}
Using $\mathcal{P}$-canonical forms of matrices, we derive the minimal polynomial of the Kronecker product of a given family of matrices in terms of the minimal polynomials of these matrices. This, allows us to prove that the product $\prod\limits_{i=1}^{m}L(P_{i})$, $L(P_{i})$ is the set of linear recurrence sequences over a field $F$ with characteristic polynomial $P_{i}$, is equal to $L(P)$ where $P$ is the minimal polynomial of the Kronecker product of the companion matrices of $P_{i}$, $1\leq i\leq m$. Also, we show how we deduce from the $\mathcal{P}$-canonical form of an arbitrary complex matrix $A$, the $\mathcal{P}$-canonical form of the matrix function $\e^{tA}$ and a logarithm of $A$.
\end{abstract}
%
\section{Introduction}
\label{sec1}
%
This paper is devoted to some applications of the main results of the paper~\cite{Mouc2} and the $\mathcal{P}$-canonical forms of matrices presented in~\cite{Mouc1}. The paper is organized as follows. In section~$2$,  we show that an important application of the $\mathcal{P}$-canonical forms of matrices is to derive the minimal polynomial of the Kronecker product of a given family of matrices in terms of the minimal polynomials of these matrices. This result allows us to prove that, if $P_{1},\ldots, P_{m}$ are monic polynomials over a field $F$, then the product $\prod\limits_{i=1}^{m}L(P_{i})$ is equal to $L(P)$ where $P$ is the minimal polynomial of the Kronecker product of the companion matrices of $P_{i}$, $1\leq i\leq m$. In section~$3$, we investigate the $\mathcal{P}$-canonical form of the matrix function $\e^{tA}$ of a an arbitrary complex matrix $A$ and, we show also how we can simply obtain the matrix logarithm $\log(A)$ and its $\mathcal{P}$-canonical form if $A$ is nonsingular. More precisely, we describe a relationship between these $\mathcal{P}$-canonical forms which is helpful for determining two of them if the other one is known. Finally, in section~4 some theoretical and numerical examples are presented to verify the
theoretical results.
%
\\\indent Through this paper, we use the following notations which are similar to those used in the papers~\cite{Mouc1,Mouc2}.
\begin{itemize}
\item $F$ is an arbitrary field.
\item $J_{s}(\alpha)$ denotes the Jordan block of order $s$ associated to $\alpha \in F$.
\item $[a_{1},\ldots,a_{n}]_{n}$ denotes the semicirculant matrix whose first row is $(a_{1},\ldots,a_{n})$.
\item The symbol $\sim$ denotes the similarity of matrices. It is well known that if $A\sim B$ and $C\sim D$ then $A\otimes B\sim C\otimes D$.
\item For all nonzero element $\lambda$ of $F$, $\pmb{\lambda}$ denotes the geometric sequence $(\lambda^{k})_{k\geq 0}$.
\item For all nonegative integer $n$, $\pmb{0}_{n}$ denotes the sequence $(\delta_{n,k})_{k\geq 0}$ where $\delta_{n,k}$ is the Kronecker symbol.
\item For all nonegative integer $i$, $\Lambda_{i}$ denotes the sequence $(\binom{k}{i})_{k\geq 0}$.
\item For all nonzero element $\lambda$ of $F$ and all positive integer $s$, $\langle\lambda\rangle_{s}$ denotes the subspace of $\mathcal{C}_{F}$ spanned by $\{\pmb{\lambda}\Lambda_{0},\ldots,\pmb{\lambda}\Lambda_{s-1}\}$, where $\mathcal{C}_{F}$ is the set of all linear recurrence sequences over $F$.
\item For all positive integer $s$, $\langle 0\rangle_{s}$ denotes the subspace of $\mathcal{C}_{F}$ spanned by $\{\pmb{0}_{0},\ldots,\pmb{0}_{s-1}\}$.
\item $\langle\lambda\rangle_{0}=\langle0\rangle_{0}=0$ the zero subspace of $\mathcal{C}_{F}$.
\item $\Gamma$ denotes the sequence $(0,1,2,\ldots)$.
\item $\mathcal{S}^{\ast}=\{\pmb{\lambda}=(\lambda^{k})_{k\geqslant 0}/\lambda\in F, \lambda\neq 0\}$ denotes the set of all nonzero geometric sequences.
\item $F_{\mathcal{S}^{\ast}}$ denotes the $F$-vector spaces spanned by $\mathcal{S}^{\ast}$.
\item $\mathcal{T}=\{\Lambda_{n}/ n\in \mathbb{N}\}$.
\item  $\mathcal{H}=(\Gamma^{n})_{n\geqslant 0}$.
\end{itemize}
Let $A$ be a square matrix over a field $F$ and let $\pmb{A}$ denote the sequence $(A^{k})_{k\geq0}$. Then there exist matrices $\mathcal{V}_{0},\ldots, \mathcal{V}_{n}$ with coefficients in $F$ and matrices $\mathcal{A}_{0},\ldots, \mathcal{A}_{l}$ with coefficients in $F_{\mathcal{S}^{\ast}}$ such that
\begin{equation*}
\pmb{A}= N(A) + \mathcal{A}_{0}\Lambda_{0} + \cdots + \mathcal{A}_{l}\Lambda_{l}
\end{equation*}
If the characteristic of $F$ is $0$, then there exist matrices $\mathcal{A'}_{0},\ldots, \mathcal{A'}_{l}$ with coefficients in $F_{\mathcal{S}^{\ast}}$ such that
\begin{equation*}
\pmb{A}=N(A) + \mathcal{A'}_{0}\Gamma^{0} + \cdots + \mathcal{A'}_{l}\Gamma^{l}
\end{equation*}
where $$N(A)=\mathcal{V}_{0}\pmb{0}_{0}+\cdots+\mathcal{V}_{n}\pmb{0}_{n}.$$
The matrices $\mathcal{V}_{0},\ldots, \mathcal{V}_{n}, \mathcal{A}_{0},\ldots, \mathcal{A}_{l}, \mathcal{A'}_{0},\ldots, \mathcal{A'}_{l}$ are uniquely determined by $A$.
\\We say that
\begin{itemize}
\item $N(A)$ is the non-geometric part of $A$.
\item $\pmb{A}-N(A)$ is the geometric part of $A$.
\item $\pmb{A}= N(A) + \mathcal{A}_{0}\Lambda_{0} + \cdots + \mathcal{A}_{l}\Lambda_{l}$ is the $\mathcal{P}$-canonical form (which we abbreviate by the $\mathcal{P}$-cf) of $A$ relative to $(\mathcal{S}^{\ast}, \mathcal{T})$.
    \item $\pmb{A}=N(A) + \mathcal{A'}_{0}\Gamma^{0} + \cdots + \mathcal{A'}_{l}\Gamma^{l}$ is the $\mathcal{P}$-cf of $A$ relative to $(\mathcal{S}^{\ast}, \mathcal{H})$.
    \end{itemize}
\section{Explicit expression of the minimal polynomial of the Kronecker product of matrices}
%
In this section we obtain the minimal polynomial of the Kronecker product of the companion matrices of any finite family of monic polynomials $P_{i}$, $1\leq i\leq m$ over a field $F$. As a consequence we obtain the monic polynomial $P$ such that $\prod\limits_{i=1}^{m}L(P_{i})=L(P)$, where $L(P)$ denote the vector space of all sequences over $F$ generated by the linear recurrence corresponding to the polynomial $P$.
\\The following definition is from~\cite{Gott} and it is equivalent to ours proposed in~\cite{Mouc2}.
\begin{definition}
For two positive integers $s$ and $t$, let $s\wedge t$ be the maximum value of $i+j+1$ such that $\binom{i+j}{i}\neq 0$ (in $F$) where $0\leq i\leq s-1$ and $0\leq j\leq t-1$.
\end{definition}
Let us recall two lemmas and a theorem proved in~\cite{Mouc2} that will be used in the proof of Theorem~\ref{Thm,,1}.
\begin{lem} Let $\lambda\in F$ and let $\wedge_{\lambda}$ be the noncommutative binary operation on $\mathbb{N}$ defined by $t\wedge_{\lambda} s=\min(t, s)\delta_{0,\lambda}+t\delta_{0,s}^{c}\delta_{0,\lambda}^{c}$ where $\delta_{e,f}^{c}=1-\delta_{e,f}$ and $\delta_{e,f}$ is the Kronecker symbol. Then For all $s,t\in \mathbb{N}$ and all $\lambda\in F$ we have $\langle0\rangle_{t}\langle\lambda\rangle_{s}=\langle0\rangle_{t\wedge_{\lambda} s}$.
\end{lem}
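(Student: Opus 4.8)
The plan is to argue directly from the definitions, reducing the statement to spanning sets and two one-line computations with termwise products. Recall that for $F$-subspaces $U,V$ of $\mathcal{C}_{F}$ the product $UV$ is the $F$-subspace of $\mathcal{C}_{F}$ spanned by all termwise products $uv$ with $u\in U$ and $v\in V$ (such a product is again a linear recurrence sequence, so this is well defined); since the termwise product is $F$-bilinear, $UV$ is in fact spanned by the products of the prescribed generators of $U$ and of $V$. Hence the whole lemma comes down to computing $\pmb{0}_{n}\pmb{0}_{m}$ and $\pmb{0}_{n}(\pmb{\lambda}\Lambda_{i})$: evaluating at the $k$-th term and using that the $k$-th term of $\pmb{0}_{n}$ is $\delta_{n,k}$, one gets
\[
\pmb{0}_{n}\,\pmb{0}_{m}=\delta_{n,m}\,\pmb{0}_{n},\qquad
\pmb{0}_{n}\,(\pmb{\lambda}\Lambda_{i})=\lambda^{n}\binom{n}{i}\,\pmb{0}_{n}.
\]

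First I would dispose of the degenerate cases: if $t=0$ or $s=0$, then one of the two factors is the zero subspace, so $\langle0\rangle_{t}\langle\lambda\rangle_{s}=0=\langle0\rangle_{0}$, and indeed $t\wedge_{\lambda}s=0$ in these cases. So assume $t,s\geq 1$ from now on, and split according to whether $\lambda=0$.

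If $\lambda=0$, then $\langle\lambda\rangle_{s}=\langle0\rangle_{s}$ and, by the first identity, the generator products $\pmb{0}_{n}\pmb{0}_{m}$ with $0\leq n\leq t-1$ and $0\leq m\leq s-1$ vanish unless $n=m$, in which case they equal $\pmb{0}_{n}$; hence $\langle0\rangle_{t}\langle0\rangle_{s}$ is spanned by $\pmb{0}_{0},\ldots,\pmb{0}_{\min(t,s)-1}$, i.e.\ equals $\langle0\rangle_{\min(t,s)}=\langle0\rangle_{t\wedge_{0}s}$. If $\lambda\neq 0$, then by the second identity every generator product $\pmb{0}_{n}(\pmb{\lambda}\Lambda_{i})$ with $0\leq n\leq t-1$ is a scalar multiple of $\pmb{0}_{n}$, so $\langle0\rangle_{t}\langle\lambda\rangle_{s}\subseteq\langle0\rangle_{t}$; conversely, taking $i=0$ gives $\pmb{0}_{n}\pmb{\lambda}=\lambda^{n}\pmb{0}_{n}$ with $\lambda^{n}\neq0$, so $\pmb{0}_{n}\in\langle0\rangle_{t}\langle\lambda\rangle_{s}$ for each $0\leq n\leq t-1$ and therefore $\langle0\rangle_{t}\subseteq\langle0\rangle_{t}\langle\lambda\rangle_{s}$. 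Thus $\langle0\rangle_{t}\langle\lambda\rangle_{s}=\langle0\rangle_{t}=\langle0\rangle_{t\wedge_{\lambda}s}$, since $t\wedge_{\lambda}s=t$ whenever $\lambda\neq0$ and $s\geq1$.

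I do not expect a genuine obstacle here: the substance of the proof is exactly the two displayed identities plus the definition of the product of subspaces. The only points requiring attention are bookkeeping ones — matching the formula for $t\wedge_{\lambda}s$ on the nose in the degenerate cases $s=0$ and $t=0$, and, in the case $\lambda\neq0$, using only the term $i=0$ rather than all the coefficients $\binom{n}{i}$ (which can vanish over a field of positive characteristic) to pull $\pmb{0}_{n}$ back into the product.
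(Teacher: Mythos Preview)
Your argument is correct. The two displayed identities $\pmb{0}_{n}\pmb{0}_{m}=\delta_{n,m}\pmb{0}_{n}$ and $\pmb{0}_{n}(\pmb{\lambda}\Lambda_{i})=\lambda^{n}\binom{n}{i}\pmb{0}_{n}$ are immediate from the definitions, and the rest is clean bookkeeping; in particular, your use of only the $i=0$ term to get the reverse inclusion when $\lambda\neq 0$ is exactly the right move to avoid any issue with vanishing binomial coefficients in positive characteristic.

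As for comparison: the present paper does not actually prove this lemma. It is one of the results \emph{recalled} from~\cite{Mouc2} (see the sentence introducing it), so there is no in-paper proof to set yours against. Your self-contained computation is therefore a genuine addition rather than a paraphrase, and it is short enough that nothing is lost by including it instead of (or alongside) the citation.
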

\begin{thrm}\label{thm 2221} Let $i,j\in \mathbb{N}$, $\lambda, \mu\in F^{\ast}=F-\{0\}$ and $\eta \in F$. Put $0\wedge s=s\wedge 0=0$ for all $s\in \mathbb{N}$. Then
\begin{enumerate}[(i)]
\item $\dim(\langle\eta\rangle_{i})=i$.
\item $\langle1\rangle_{i}\langle1\rangle_{j}=\langle1\rangle_{i\wedge j}$.
\item $\langle\lambda\rangle_{i}\langle\mu\rangle_{j}=\langle\lambda\mu\rangle_{i\wedge j}$.

\end{enumerate}
\end{thrm}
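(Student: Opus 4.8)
The plan is to reduce part (iii), and with it its special case (ii), to a single combinatorial fact about carry‑free base‑$p$ additions, while part (i) is a short triangularity argument.

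\emph{Reductions.} For $\eta\in F^{\ast}$, multiplication by the geometric sequence $\pmb{\eta}$ is an injective $F$-linear self-map of the space of all sequences which sends the spanning family $\Lambda_{0},\dots,\Lambda_{s-1}$ of $\langle 1\rangle_{s}$ to the spanning family $\pmb{\eta}\Lambda_{0},\dots,\pmb{\eta}\Lambda_{s-1}$ of $\langle\eta\rangle_{s}$; hence $\dim\langle\eta\rangle_{s}=\dim\langle 1\rangle_{s}$, and, since $\pmb{\lambda}\pmb{\mu}=\pmb{\lambda\mu}$ and $(\pmb{\lambda}\Lambda_{a})(\pmb{\mu}\Lambda_{b})=\pmb{\lambda\mu}\,\Lambda_{a}\Lambda_{b}$, one has $\langle\lambda\rangle_{i}\langle\mu\rangle_{j}=\pmb{\lambda\mu}\cdot(\langle 1\rangle_{i}\langle 1\rangle_{j})$. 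So (iii) reduces to (ii), which is its case $\lambda=\mu=1$, and (i) reduces to showing that $\Lambda_{0},\dots,\Lambda_{i-1}$ are $F$-linearly independent — the case $\eta=0$ being clear as $\pmb{0}_{0},\dots,\pmb{0}_{i-1}$ are the standard coordinate sequences, and $i=0$ or $j=0$ trivial by the stated convention. That independence is visible on the first $i$ coordinates: the matrix $(\binom{k}{m})_{0\le k,m\le i-1}$ is lower triangular with unit diagonal, hence invertible over any field, so its columns (the truncations of $\Lambda_{0},\dots,\Lambda_{i-1}$) are independent.

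\emph{A product formula.} The computational tool is the expansion $\Lambda_{a}\Lambda_{b}=\sum_{r=0}^{\min(a,b)}\binom{a+b-r}{r,\,a-r,\,b-r}\,\Lambda_{a+b-r}$ in $\mathcal{C}_{F}$, where the trinomial coefficient, which also equals $\binom{a+b-r}{a-r}\binom{b}{r}$, is read in $F$. This is the identity $\binom{x}{a}\binom{x}{b}=\sum_{r}\binom{a+b-r}{r,a-r,b-r}\binom{x}{a+b-r}$ of integer-valued polynomials — proved, for instance, by counting pairs made of an $a$-subset and a $b$-subset of an $x$-element set, sorted by the size of their union — evaluated at $x=k$ and reduced modulo the characteristic. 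Granting it, the inclusion $\langle 1\rangle_{i}\langle 1\rangle_{j}\subseteq\langle 1\rangle_{i\wedge j}$ follows: whenever the coefficient of $\Lambda_{k}$ with $k=a+b-r$, $a\le i-1$, $b\le j-1$, is nonzero in $F$, so is its integer factor $\binom{(a-r)+b}{a-r}$, whence $k+1=(a-r)+b+1\le i\wedge j$ by the definition of $\wedge$; thus every $\Lambda_{k}$ occurring in a product $\Lambda_{a}\Lambda_{b}$ with $a\le i-1$, $b\le j-1$ lies in $\langle 1\rangle_{i\wedge j}=\operatorname{span}(\Lambda_{0},\dots,\Lambda_{i\wedge j-1})$.

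\emph{The reverse inclusion and the main obstacle.} It remains to show $\Lambda_{c}\in\langle 1\rangle_{i}\langle 1\rangle_{j}$ for every $c\le i\wedge j-1$, which I would prove by strong induction on $c$: if one can find $a\le i-1$, $b\le j-1$ with $a+b=c$ and $\binom{c}{a}\ne 0$ in $F$, then the product formula reads $\Lambda_{a}\Lambda_{b}=\binom{c}{a}\Lambda_{c}+(\text{an }F\text{-combination of }\Lambda_{c'},\ c'<c)$, and since $\Lambda_{a}\Lambda_{b}\in\langle 1\rangle_{i}\langle 1\rangle_{j}$ while the lower terms are there by induction, so is $\Lambda_{c}$. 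The whole argument therefore rests on the combinatorial lemma: \emph{for each $c$ with $0\le c\le i\wedge j-1$ there exist $a\le i-1$ and $b\le j-1$ with $a+b=c$ and $\binom{c}{a}\ne 0$ in $F$}. By Kummer's theorem $\binom{c}{a}\ne 0$ in characteristic $p$ exactly when the addition $a+(c-a)=c$ has no carries in base $p$ (equivalently, each base-$p$ digit of $a$ is at most the corresponding digit of $c$), and in characteristic $0$ there is nothing to check; proving this lemma is the step I expect to be the real difficulty. I would establish it by descent from $c=i\wedge j-1$: a pair realizing the value $i\wedge j$ gives a carry-free decomposition of $i\wedge j-1$ inside the rectangle $[0,i-1]\times[0,j-1]$, and given any carry-free decomposition $m=a+b$ in that rectangle with $m\ge 1$ — letting $t$ be the lowest position at which the base-$p$ digit $m_{t}$ is nonzero, so that $a_{s}=b_{s}=0$ for $s<t$ and $a_{t}+b_{t}=m_{t}\ge 1$ — one replaces $a$ by $a-1$ if $a_{t}\ge 1$, and otherwise $b$ by $b-1$. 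The borrow only installs digits equal to $p-1$ in the positions below $t$, so the modified entry stays digit-wise dominated by the digits of $m-1$, the pair remains in the rectangle and carry-free, and it now sums to $m-1$. Descending all the way to $c=0$ proves the lemma, and with it (ii), hence (iii).
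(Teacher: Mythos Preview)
Your argument is correct. Part (i) is handled cleanly by the triangularity observation, the reduction of (iii) to (ii) via termwise multiplication by the invertible sequence $\pmb{\lambda\mu}$ is valid, and your product formula $\Lambda_{a}\Lambda_{b}=\sum_{r}\binom{a+b-r}{r,\,a-r,\,b-r}\Lambda_{a+b-r}$ together with the Kummer/Lucas-based descent yields both inclusions in (ii). The descent step is the delicate part and you have it right: because $a_{s}=b_{s}=0$ for $s<t$, subtracting $1$ from whichever of $a,b$ has $t$-th digit $\ge 1$ produces a new pair in the rectangle whose digitwise sum is exactly that of $m-1$, with no carries.

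As for comparison with the paper: there is nothing to compare against here. The present paper does not prove this theorem; it explicitly recalls it from~\cite{Mouc2} (see the sentence preceding Lemma~2.2: ``Let us recall two lemmas and a theorem proved in~\cite{Mouc2}\ldots''). So your proposal is a self-contained proof of a result that is merely quoted in this paper. If one wished to compare with the original proof in~\cite{Mouc2}, one would need to consult that reference; the direct combinatorial route you take --- a single expansion formula plus a carry-free descent --- is in any case a clean and elementary way to establish the result.
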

\begin{lem}\label{lem 197} Let $i,j$ be positive integers and $s,s',t,t'$ non-negative integers.
\begin{enumerate}[(1)]
\item If $F$ is a field with characteristic $0$, then we have $i\wedge j=i+j-1$.
\item If $s'\leq s$ and $t'\leq t$, then $s'\wedge t'\leq s\wedge t$ and $s'\wedge_{\lambda} t'\leq s\wedge_{\lambda} t$ for all $\lambda\in F$.

\end{enumerate}
\end{lem}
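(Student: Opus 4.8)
The plan is to argue directly from the definition of $s\wedge t$ recalled above and from the closed formula for $t\wedge_{\lambda} s$, handling the two items in turn. For item~(1), I would first note that when $\mathrm{char}(F)=0$ every binomial coefficient $\binom{a+b}{a}$ is a positive integer, hence nonzero in $F$, so the side condition ``$\binom{a+b}{a}\neq 0$'' in the definition of $i\wedge j$ never rules anything out. Therefore $i\wedge j$ is simply the maximum of $a+b+1$ over $0\leq a\leq i-1$ and $0\leq b\leq j-1$; since $i,j\geq 1$ this index set is non-empty and the maximum is attained at $a=i-1,\ b=j-1$, which gives $i\wedge j=i+j-1$.

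For item~(2), for the operation $\wedge$ I would compare two maximization problems: the feasible set $\{(a,b):0\leq a\leq s'-1,\ 0\leq b\leq t'-1,\ \binom{a+b}{a}\neq 0\}$ defining $s'\wedge t'$ is, by $s'\leq s$ and $t'\leq t$, contained in the feasible set defining $s\wedge t$, and it is non-empty whenever $s',t'\geq 1$ since it contains $(0,0)$; hence the maximum of $a+b+1$ over the smaller set is at most the maximum over the larger one, i.e.\ $s'\wedge t'\leq s\wedge t$. For the operation $\wedge_{\lambda}$ I would substitute into $s'\wedge_{\lambda} t'=\min(s',t')\delta_{0,\lambda}+s'\,\delta_{0,t'}^{c}\delta_{0,\lambda}^{c}$ and split on whether $\lambda=0$: when $\lambda=0$ the desired inequality is just monotonicity of $\min$; when $\lambda\neq 0$, $s'\wedge_{\lambda} t'$ equals $s'$ if $t'\neq 0$ and $0$ if $t'=0$, and in both cases the bound by $s\wedge_{\lambda} t$ is immediate (using $t'\geq 1\Rightarrow t\geq 1$ in the former).

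I do not expect a real obstacle; the one point needing care is the degenerate cases in which some of $s',t',s,t$ vanish, where the ``corner'' description of $s\wedge t$ is unavailable and one must fall back on the convention $0\wedge u=u\wedge 0=0$ together with the elementary fact that $s\wedge t\geq 0$ (and $s\wedge_{\lambda} t\geq 0$) always. I would accordingly dispose of those cases first and then carry out the two monotonicity arguments above under the standing assumption $s',t'\geq 1$.
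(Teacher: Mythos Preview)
Your argument is correct. Both parts are handled by direct inspection of the definitions: in characteristic~$0$ every binomial coefficient is nonzero, so $i\wedge j=\max\{a+b+1:0\le a\le i-1,\ 0\le b\le j-1\}=i+j-1$; and the monotonicity of $\wedge$ follows from containment of feasible sets, while that of $\wedge_\lambda$ follows from the closed formula by the case split on $\lambda$. Your treatment of the degenerate cases via the convention $0\wedge u=u\wedge 0=0$ is also fine.

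Note, however, that the present paper does not give its own proof of this lemma: it is merely \emph{recalled} from~\cite{Mouc2} (see the sentence preceding Lemma~2.2). So there is no in-paper proof to compare against. Your verification-from-definitions is the natural (and essentially only) route, and it is entirely in the spirit of how such elementary monotonicity statements are established; there is no reason to expect the argument in~\cite{Mouc2} to differ in substance.
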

The following lemma is proved in~\cite{Mouc1}.
\begin{lem}\label{lem 325}
Let $A$ be a square matrix over $F$. Then the minimal polynomial of $A$ is $\mathcal{M}_{A}(X)=X^{t_{0}}\prod_{j=1}^{p}(X-\lambda_{j})^{t_{j}}$, where $t_{0}$ and $t_{j}, j\neq 0$, are respectively the greatest integers such that $\pmb{0}_{t_{0}-1}$ and $\pmb{\lambda}_{j}\Lambda_{t_{j}-1}$ appear in the $\mathcal{P}$-cf of $A$.
\end{lem}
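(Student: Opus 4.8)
The plan is to translate the minimal polynomial of $A$ into the annihilator ideal, inside $F[X]$, of the matrix sequence $\pmb{A}=(A^{k})_{k\geq0}$ under the left shift, and then to read that ideal directly off the $\mathcal{P}$-cf. The point of departure is the elementary identity $(P\cdot\pmb{A})_{k}=\sum_{i}c_{i}A^{k+i}=A^{k}P(A)$ for $P=\sum_{i}c_{i}X^{i}$, which shows that $P$ annihilates the sequence $\pmb{A}$ if and only if $P(A)=0$; hence $\mathcal{M}_{A}$ is exactly the monic generator of $\mathrm{Ann}(\pmb{A})$, and it remains to compute this annihilator.

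First I would rewrite the $\mathcal{P}$-cf over the canonical family: expanding the $F_{\mathcal{S}^{\ast}}$-entries as combinations of the geometric generators $\pmb{\lambda}$, one gets $\pmb{A}=\sum_{m}\mathcal{V}_{m}\pmb{0}_{m}+\sum_{\lambda\in S}\sum_{i}\mathcal{A}_{i,\lambda}\,\pmb{\lambda}\Lambda_{i}$, where $S\subseteq F^{\ast}$ is the finite set of scalars that actually occur and the $\mathcal{V}_{m},\mathcal{A}_{i,\lambda}$ are matrices over $F$. Then I would compute the annihilator of each building block. From $X^{m+1}\cdot\pmb{0}_{m}=0$ and $X^{m}\cdot\pmb{0}_{m}=\pmb{0}_{0}\neq0$ one reads $\mathrm{Ann}(\pmb{0}_{m})=(X^{m+1})$. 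For $\lambda\neq0$, Pascal's identity gives $(X-\lambda)\cdot\pmb{\lambda}\Lambda_{i}=\lambda\,\pmb{\lambda}\Lambda_{i-1}$ over any field, so by iteration $(X-\lambda)^{i+1}\cdot\pmb{\lambda}\Lambda_{i}=0$ while $(X-\lambda)^{i}\cdot\pmb{\lambda}\Lambda_{i}=\lambda^{i}\pmb{\lambda}\neq0$; since every annihilator is principal and a monic divisor of $(X-\lambda)^{i+1}$ is a power of $X-\lambda$, this forces $\mathrm{Ann}(\pmb{\lambda}\Lambda_{i})=\bigl((X-\lambda)^{i+1}\bigr)$ — in particular $\langle\lambda\rangle_{s}=L((X-\lambda)^{s})$, which is consistent with $\dim\langle\lambda\rangle_{s}=s$ from Theorem~\ref{thm 2221}.

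To assemble the result I would invoke that $\mathcal{C}_{F}$ is the direct sum of its $X$-primary part $\bigcup_{s}\langle0\rangle_{s}$ and its $(X-\lambda)$-primary parts $\bigcup_{s}\langle\lambda\rangle_{s}$, each shift-stable; a polynomial sends a sum of pieces from distinct towers to a sum of pieces in the same towers, so by directness it annihilates the sum iff it annihilates each piece. Applying this to each scalar entry of $\pmb{A}$ gives $\mathrm{Ann}\bigl((\pmb{A})_{ab}\bigr)=\bigl(X^{e_{0}^{ab}}\prod_{\lambda\in S}(X-\lambda)^{e_{\lambda}^{ab}}\bigr)$ with $e_{0}^{ab}=\max\{m+1:(\mathcal{V}_{m})_{ab}\neq0\}$ and $e_{\lambda}^{ab}=\max\{i+1:(\mathcal{A}_{i,\lambda})_{ab}\neq0\}$; intersecting over all $(a,b)$ and using that a matrix is nonzero iff some entry is, one obtains $\mathrm{Ann}(\pmb{A})=\bigl(X^{t_{0}}\prod_{\lambda\in S}(X-\lambda)^{t_{\lambda}}\bigr)$ with $t_{0}=\max\{m+1:\mathcal{V}_{m}\neq0\}$ and $t_{\lambda}=\max\{i+1:\mathcal{A}_{i,\lambda}\neq0\}$. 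Since $t_{0}$ and the $t_{\lambda}$ are precisely the greatest integers for which $\pmb{0}_{t_{0}-1}$, respectively $\pmb{\lambda}\Lambda_{t_{\lambda}-1}$, occur in the $\mathcal{P}$-cf, and $S=\{\lambda_{1},\dots,\lambda_{p}\}$, this is the stated formula for $\mathcal{M}_{A}$.

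The step I expect to carry the real weight is the reduction ``$\mathrm{Ann}$ of a sum equals the product (equivalently intersection) of the annihilators of its primary components'': this is not a formal triviality for arbitrary sequences, and it rests on the uniqueness of the $\mathcal{P}$-cf — equivalently, on the linear independence in $\mathcal{C}_{F}$ of the canonical family $\{\pmb{0}_{m}\}\cup\{\pmb{\lambda}\Lambda_{i}\}$ and the consequent directness of the primary decomposition, as established in~\cite{Mouc1}. With that in hand the rest is routine manipulation of the shift operator together with Pascal's identity.
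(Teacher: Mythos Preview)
The paper does not supply its own proof of this lemma: immediately before the statement it says ``The following lemma is proved in~\cite{Mouc1}'' and simply quotes the result. So there is no in-paper argument to compare against.

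Your argument is correct and self-contained. The identification of $\mathcal{M}_{A}$ with the monic generator of the shift-annihilator of $\pmb{A}$ via $(P\cdot\pmb{A})_{k}=A^{k}P(A)$ is exactly right, and your computations of $\mathrm{Ann}(\pmb{0}_{m})=(X^{m+1})$ and $\mathrm{Ann}(\pmb{\lambda}\Lambda_{i})=((X-\lambda)^{i+1})$ via Pascal's rule are accurate over any field. The ``real weight'' step you flag is indeed the crux, and you locate it correctly: the shift-stability of each tower $\langle\lambda\rangle_{s}$ together with the linear independence of the canonical family (i.e.\ uniqueness of the $\mathcal{P}$-cf, established in~\cite{Mouc1}) is precisely what lets you split the annihilator primary by primary and then take the lcm.

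For what it is worth, the explicit formula the paper later quotes from~\cite{Mouc1} (Proposition~3.1 there, used in the proof of Theorem~\ref{thm,30}),
\[
\pmb{A}=\sum_{i=0}^{t_{0}-1}\pmb{0}_{i}A^{i}\pi_{0}+\sum_{i}\sum_{j=1}^{p}\pmb{\lambda}_{j}\,\lambda_{j}^{-i}(A-\lambda_{j}I_{q})^{i}\pi_{j}\,\Lambda_{i},
\]
suggests the route likely taken in~\cite{Mouc1}: one reads off that the coefficient of $\pmb{\lambda}_{j}\Lambda_{i}$ is $\lambda_{j}^{-i}(A-\lambda_{j}I_{q})^{i}\pi_{j}$, which is nonzero exactly for $i\leq t_{j}-1$ because $t_{j}$ is the index of $\lambda_{j}$, and similarly for the nilpotent part. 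That approach uses the spectral decomposition of $A$ up front, whereas yours works purely on the sequence side and recovers the minimal polynomial without ever naming $\pi_{j}$; both rely on the same uniqueness principle for the $\mathcal{P}$-cf.
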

\begin{thrm}\label{Thm,,1} Let $\alpha$ and $\beta$ be non zero elements of $F$. Let $A=\oplus_{i=1}^{m}J_{s_{i}}(\alpha)$, $B=\oplus_{i=1}^{n}J_{t_{i}}(\beta)$ and $C=\oplus_{i=1}^{m}J_{s_{i}}(0)$. Then
\begin{enumerate}[(1)]
\item The minimal polynomial of the matrix $J_{s}(1)\otimes J_{t}(1)$ is $(X-1)^{s\wedge t}$.
\item The minimal polynomial of the matrix $J_{s}(\alpha)\otimes J_{t}(\beta)$ is $(X-\alpha\beta)^{s\wedge t}$.
\item The minimal polynomial of the nilpotent matrix $J_{s}(0)\otimes J_{t}(\lambda)$ is $X^{s\wedge_{\lambda} t}$.
\item The minimal polynomial of the matrix $A\otimes B$ is $(X-\alpha\beta)^{\iota_{A}(\alpha)\wedge \iota_{B}(\beta)}$, where $\iota_{A}(\alpha)$ and $\iota_{B}(\beta)$ are the indexes of $\alpha$ and $\beta$ as eigenvalues of $A$ and $B$, respectively.
\item The minimal polynomial of the matrix $C\otimes A$ is $X^{\iota_{C}(0)\wedge_{\alpha} \iota_{A}(\alpha)}$.
    \end{enumerate}
\end{thrm}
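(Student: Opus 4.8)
The plan is to pass from matrices to subspaces of $\mathcal{C}_F$. For a square matrix $M$ over $F$, write $V(M)$ for the $F$-span in $\mathcal{C}_F$ of the entry sequences of the power sequence $\pmb{M}=(M^{k})_{k\ge 0}$. Two preliminary facts are needed. First, for any square matrices $M,N$ over $F$ one has $V(M\otimes N)=V(M)\,V(N)$ (product of subspaces under termwise multiplication of sequences): since $(M\otimes N)^{k}=M^{k}\otimes N^{k}$, each entry of $\pmb{M\otimes N}$ is the termwise product of an entry of $\pmb{M}$ with one of $\pmb{N}$, and products of spanning families span the product space. Second, $V(J_{s}(\lambda))=\langle\lambda\rangle_{s}$ for $\lambda\neq 0$ and $V(J_{s}(0))=\langle0\rangle_{s}$, which is immediate from $\big(J_{s}(\lambda)^{k}\big)_{i,i+r}=\binom{k}{r}\lambda^{k-r}$ and $\big(J_{s}(0)^{k}\big)_{i,i+r}=\delta_{k,r}$.

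For parts (1)--(3) I would invoke the product formulas already recalled. By Theorem~\ref{thm 2221}(ii),(iii), $V\!\big(J_{s}(1)\otimes J_{t}(1)\big)=\langle1\rangle_{s}\langle1\rangle_{t}=\langle1\rangle_{s\wedge t}$ and $V\!\big(J_{s}(\alpha)\otimes J_{t}(\beta)\big)=\langle\alpha\rangle_{s}\langle\beta\rangle_{t}=\langle\alpha\beta\rangle_{s\wedge t}$; and by the lemma computing $\langle0\rangle_{t}\langle\lambda\rangle_{s}$, $V\!\big(J_{s}(0)\otimes J_{t}(\lambda)\big)=\langle0\rangle_{s}\langle\lambda\rangle_{t}=\langle0\rangle_{s\wedge_{\lambda} t}$. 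It then remains to read the minimal polynomial off from $V(M)$. If $V(M)=\langle\mu\rangle_{r}$ with $\mu\neq 0$, then, because the sequences $\{\pmb{0}_{i}\}\cup\{\pmb{\nu}\Lambda_{j}\}$ are linearly independent and the $\mathcal{P}$-cf is unique, the $\mathcal{P}$-cf of $M$ can only involve $\pmb{\mu}\Lambda_{0},\dots,\pmb{\mu}\Lambda_{r-1}$, and the coefficient matrix of $\pmb{\mu}\Lambda_{r-1}$ must be nonzero---otherwise every entry of $\pmb{M}$ would lie in $\langle\mu\rangle_{r-1}$, forcing $\dim V(M)\le r-1$, a contradiction. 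Lemma~\ref{lem 325} then gives $\mathcal{M}_{M}=(X-\mu)^{r}$; the same argument with $\langle0\rangle_{r}$ in place of $\langle\mu\rangle_{r}$ gives $\mathcal{M}_{M}=X^{r}$, which is part (3), and parts (1) and (2) follow by taking $\mu=1$ and $\mu=\alpha\beta$.

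For (4) and (5) I would reduce to the one-block case by block decomposition of the Kronecker product: $A\otimes B$ is similar to $\bigoplus_{i,j}\big(J_{s_{i}}(\alpha)\otimes J_{t_{j}}(\beta)\big)$, and the minimal polynomial of a direct sum is the least common multiple of those of the summands, so by part (2) it equals $(X-\alpha\beta)^{\max_{i,j}(s_{i}\wedge t_{j})}$. Now Lemma~\ref{lem 197}(2) gives $s_{i}\wedge t_{j}\le(\max_{i}s_{i})\wedge(\max_{j}t_{j})$, with equality reached at the maximal indices, so $\max_{i,j}(s_{i}\wedge t_{j})=(\max_{i}s_{i})\wedge(\max_{j}t_{j})=\iota_{A}(\alpha)\wedge\iota_{B}(\beta)$, since $\iota_{A}(\alpha)=\max_{i}s_{i}$ is exactly the size of the largest Jordan block of $A$ attached to $\alpha$. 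Part (5) is obtained the same way from part (3) and the monotonicity of $\wedge_{\alpha}$ in Lemma~\ref{lem 197}(2), using $\iota_{C}(0)=\max_{i}s_{i}$.

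The routine portions are the two preliminary facts and the direct-sum/lcm reduction. I expect the one delicate point to be the step reading $\mathcal{M}_{M}$ off from $V(M)$: one has to be sure that knowing the span $V(M)$ equals $\langle\mu\rangle_{r}$ really pins down the top index occurring in the $\mathcal{P}$-cf of $M$, and it is precisely the dimension count together with the uniqueness (hence linear independence) of the $\mathcal{P}$-canonical decomposition that closes this gap.
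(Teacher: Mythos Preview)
Your argument is correct and follows essentially the same route as the paper's proof: both compute the entry sequences of the power sequence of the Kronecker product, identify the resulting span with $\langle\mu\rangle_{r}$ via the product formulas $\langle\lambda\rangle_{s}\langle\mu\rangle_{t}=\langle\lambda\mu\rangle_{s\wedge t}$ and $\langle0\rangle_{s}\langle\lambda\rangle_{t}=\langle0\rangle_{s\wedge_{\lambda}t}$, and then read off the minimal polynomial through Lemma~\ref{lem 325}; parts (4) and (5) are handled identically in both via the block decomposition and the monotonicity in Lemma~\ref{lem 197}(2). The only noteworthy difference is in part (2): the paper reduces to part (1) by the similarity $J_{s}(\alpha)\otimes J_{t}(\beta)\sim \alpha\beta\,J_{s}(1)\otimes J_{t}(1)$ (using $J_{s}(1)\sim[1,\lambda,0,\ldots,0]_{s}$), whereas you invoke Theorem~\ref{thm 2221}(iii) directly; your packaging through the space $V(M)$ and the explicit dimension argument for why the top coefficient matrix is nonzero is a clean way to make precise the step the paper phrases as ``the greatest integer $m$ such that $\Lambda_{m-1}$ appears \ldots\ is exactly $\dim\langle1\rangle_{s}\langle1\rangle_{t}$''.
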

\proof~
\begin{enumerate}[(1)]
\item Observe first that the geometric part of $J_{s}(1)\otimes J_{t}(1)$ is $(J_{s}(1)^{k}\otimes J_{t}(1)^{k})_{k\geq 0}$. Moreover, it is well-known that $$(J_{s}(1)^{k})_{k\geq 0}=[\Lambda_{0},\ldots,\Lambda_{s-1}]_{s}$$ (see, e.g, formula~(18) of~\cite{Mouc0}). Then $$(J_{s}(1)^{k}\otimes J_{t}(1)^{k})_{k\geq 0}=[\Lambda_{0},\ldots,\Lambda_{s-1}]_{s}\otimes [\Lambda_{0},\ldots,\Lambda_{t-1}]_{t}$$
Since all elements of this last matrix lie in its first row which is $$[\Lambda_{0}\Lambda_{0},\cdots,\Lambda_{0}\Lambda_{t-1},\cdots,\Lambda_{s-1}\Lambda_{0},\cdots,\Lambda_{s-1}\Lambda_{t-1}],$$ the greatest integer $m$ such that $\Lambda_{m-1}$ appear in the geometric part of $J_{s}(1)\otimes J_{t}(1)$ is exactly the dimension of the vector space $\langle1\rangle_{s}\langle1\rangle_{t}$. Therefore in view of Lemma~\ref{lem 325} we deduce that the minimal polynomial of the matrix $J_{s}(1)\otimes J_{t}(1)$ is $(X-1)^{s\wedge t}$.
\item The proof is similar to that of $(2)$
\item Since $J_{s}(1)\sim [1,\lambda,0\ldots,0]_{s}$ for all $\lambda\in F$ such that $\lambda\neq 0$, it follows that, for all $s,t\in \mathbb{N}^{\ast}$ we have $J_{s}(\alpha)\otimes J_{t}(\beta)\sim \alpha\beta J_{s}(1)\otimes J_{t}(1)$, and then the minimal polynomial of $J_{s}(\alpha)\otimes J_{t}(\beta)$ is $(X-\alpha\beta)^{s\wedge t}$.
    \item Since $A\otimes B\sim \oplus_{i,j}(J_{s_{i}}(\alpha)\otimes J_{t_{j}}(\beta))$, it follows that the minimal polynomial of the matrix $A\otimes B$ is $(X-\alpha\beta)^{\max_{i,j}(s_{i}\wedge t_{j})}=(X-\alpha\beta)^{\iota_{A}(\alpha)\wedge \iota_{B}(\beta)}$, in view of the second assertion of Lemma~\ref{lem 197}.
        \item The proof is similar to that of $(4)$
 \end{enumerate}
\endproof
To deal with the general case of any finite number of arbitrary square matrices, we need the following notations used in~\cite{Mouc2}.
\\Let $m$ be a positive integer and let $\mathcal{R}$ be the equivalence relation on ${\overline{F}^{\star}}^{m}$, $\overline{F}$ designates the algebraic closure of the field $F$, defined by
$$(\lambda_{1},\ldots,\lambda_{m})\mathcal{R}(\mu_{1},\ldots,\mu_{m}) \quad\text{if and only if}\quad \lambda_{1}\cdots\lambda_{m}=\mu_{1}\cdots\mu_{m}.$$
Let us partition ${\overline{F}^{\star}}^{m}$ into equivalence classes under the equivalence relation $\mathcal{R}$
\begin{equation*}
{\overline{F}^{\star}}^{m}=\bigcup_{i\in T} \Omega_{i}
\end{equation*}
Let $\wedge_{m}$ denote the map defined as follows
\begin{align*}
\wedge_{m}: \mathbb{N}^{m}&\longrightarrow \mathbb{N} \\
(t_{1},\ldots,t_{m})&\longmapsto t_{1}\wedge\cdots\wedge t_{m}
\end{align*}
For any arbitrary monic polynomials $P_{1},\ldots,P_{m}$, let us define the map $f=f_{P_{1},\ldots,P_{m}}$ as follows
\begin{align*}
f: {\overline{F}^{\star}}^{m}&\longrightarrow \mathbb{N}^{m}\\
(\lambda_{1},\ldots,\lambda_{m})&\longmapsto (\lambda_{1}(P_{1}),\ldots,\lambda_{m}(P_{m})).
\end{align*}
where $\lambda(P_{i})$ designates the multiplicity of $\lambda$ in $P_{i}$
\\let us use the following notations for simplicity
\begin{eqnarray*}
[\mu_{1}]_{f}\cdots[\mu_{m}]_{f}&=&[\mu_{1}]_{\mu_{1}(P_{1})}\cdots[\mu_{m}]_{\mu_{m}(P_{m})}\\
&=&[\mu_{1}\cdots\mu_{m}]_{\overline{f}(\mu_{1},\ldots,\mu_{m})}\\
\overline{f}&=&\wedge_{m}\circ f\\
\widehat{\Omega}_{i}&=&\mu_{1}\cdots\mu_{m}\quad\text{if}\quad (\mu_{1},\ldots,\mu_{m})\in \Omega_{i}\\
\Omega_{i}^{f}&=&\max_{(\mu_{1},\ldots,\mu_{m})\in \Omega_{i}}(\overline{f}(\mu_{1},\ldots,\mu_{m}))\\
\Upsilon(P_{1},\ldots,P_{m})&=&\prod_{i\in T}(X-\widehat{\Omega}_{i})^{\Omega_{i}^{f}}
\end{eqnarray*}
Under the above notations and assumptions, we have the following theorem:
\begin{thrm}\label{Thm 2453} Let $A_{i}, 1\leq i\leq m$ be square matrices over $F$ with minimal polynomials $\mathcal{M}_{A_{i}}(X)=X^{\iota_{A_{i}}(0)}Q_{i}$, and put $\Theta=\{i/Q_{i}= 1\}$. Then the minimal polynomial of the matrix $\otimes_{i=1}^{m}A_{i}$ is $X^{\rho}\Upsilon(Q_{1},\ldots,Q_{m}),$
where
\begin{eqnarray*}
\rho=\begin{cases}
\min\{\iota_{A_{i}}(0)/i\in \Theta\}&\quad\text{if}\quad \Theta\neq\emptyset\\
\max\{\iota_{A_{i}}(0)/1\leq i\leq m\}& \quad\text{otherwise}.
\end{cases}
\end{eqnarray*}
\end{thrm}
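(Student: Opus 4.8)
The plan is to reduce everything to Kronecker products of Jordan blocks and then take a least common multiple. Since the minimal polynomial is unchanged under field extension, I would first pass to $\overline F$ and put each $A_i$ in Jordan form, recording that for each eigenvalue $\lambda$ of $A_i$ the largest Jordan block has size $\iota_{A_i}(\lambda)=\lambda(\mathcal M_{A_i})$. Applying $A\otimes C\sim\bigoplus(\text{blocks of }A)\otimes(\text{blocks of }C)$ repeatedly, $\bigotimes_{i=1}^{m}A_i$ is similar to the direct sum, over all $m$-tuples of Jordan blocks (one from each $A_i$), of the matrices $\bigotimes_{i=1}^{m}J_{s_i}(\lambda_i)$. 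Hence $\mathcal M_{\otimes A_i}$ is the l.c.m.\ of the minimal polynomials of these Kronecker products of Jordan blocks, and the whole problem is to identify that l.c.m.

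The second step is to compute $\mathcal M$ for a single $\bigotimes_{i=1}^{m}J_{s_i}(\lambda_i)$. If every $\lambda_i\neq 0$, then $\bigotimes_i J_{s_i}(\lambda_i)\sim\bigl(\prod_i\lambda_i\bigr)\bigotimes_i J_{s_i}(1)$, and the semicirculant argument of the proof of Theorem~\ref{Thm,,1}(1) — now with first row consisting of the products $\Lambda_{j_1}\cdots\Lambda_{j_m}$, $0\le j_i\le s_i-1$ — combined with Theorem~\ref{thm 2221}(i)--(ii), the associativity of $\wedge_m$, and Lemma~\ref{lem 325} yields $\mathcal M=(X-\prod_i\lambda_i)^{s_1\wedge\cdots\wedge s_m}$; alternatively one inducts on $m$ from Theorem~\ref{Thm,,1}(2) using the monotonicity of $\wedge$ in Lemma~\ref{lem 197}(2). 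If some $\lambda_i=0$, put $Z=\{i:\lambda_i=0\}\neq\emptyset$; then $(\bigotimes_i J_{s_i}(\lambda_i))^{k}=\bigotimes_i J_{s_i}(\lambda_i)^{k}$ vanishes exactly when $J_{s_i}(0)^{k}=0$ for some $i\in Z$ (the factors with $\lambda_i\neq 0$ being invertible), i.e.\ when $k\ge\min_{i\in Z}s_i$, so the matrix is nilpotent of index $\min_{i\in Z}s_i$ and $\mathcal M=X^{\min_{i\in Z}s_i}$.

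The third step is to assemble the l.c.m. The \emph{positive} tuples — those with all $\lambda_i$ nonzero eigenvalues of $A_i$ — I would group by the value $\nu=\prod_i\lambda_i$, that is, by the $\mathcal R$-classes $\Omega$. Within a class, Lemma~\ref{lem 197}(2) shows the largest exponent $s_1\wedge\cdots\wedge s_m$ is obtained by taking the maximal block for each $\lambda_i$, namely $\iota_{A_1}(\lambda_1)\wedge\cdots\wedge\iota_{A_m}(\lambda_m)=\overline f(\lambda_1,\dots,\lambda_m)$ for $f=f_{Q_1,\dots,Q_m}$ (using $\iota_{A_i}(\lambda_i)=\lambda_i(Q_i)$ when $\lambda_i\neq 0$, which is $0$ — forcing $\overline f=0$ — precisely when $\lambda_i$ is not an eigenvalue of $A_i$); maximizing over $\Omega$ gives the exponent $\Omega^f$, so the positive tuples contribute $\prod_{i\in T}(X-\widehat\Omega_i)^{\Omega_i^{f}}=\Upsilon(Q_1,\dots,Q_m)$. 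The tuples with some $\lambda_i=0$ contribute powers $X^{r}$ with $r=\min_{i\in Z}s_i$, and the top such $r$ is a short case analysis: the indices in $\Theta$ are forced into $Z$ (they admit no nonzero eigenvalue) while any other index of $Z$ may be removed by assigning it a nonzero eigenvalue, which only enlarges the minimum; taking maximal blocks then gives $\rho=\min\{\iota_{A_i}(0):i\in\Theta\}$ when $\Theta\neq\emptyset$, and when $\Theta=\emptyset$ one takes $Z$ a singleton with its maximal block, giving $\rho=\max\{\iota_{A_i}(0):1\le i\le m\}$. Since $X$ and $\Upsilon(Q_1,\dots,Q_m)$ are coprime, the l.c.m.\ of all contributions is $X^{\rho}\,\Upsilon(Q_1,\dots,Q_m)$.

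The main obstacle I anticipate is this last step: the bookkeeping that collapses the l.c.m.\ over all block-tuples to exactly $X^{\rho}\,\Upsilon$. One must verify that maximal Jordan blocks are optimal within each class (Lemma~\ref{lem 197}(2)), that $\mathcal R$-classes containing no all-eigenvalue tuple contribute trivially, and that the two regimes for $\rho$ are genuinely the maxima produced by the zero-tuples. By contrast, the Kronecker-product-of-Jordan-blocks computations in the second step are routine once Theorem~\ref{Thm,,1} and the nilpotency observation are in hand.
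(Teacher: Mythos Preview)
Your proof is correct and follows essentially the same strategy as the paper's: pass to $\overline F$, put each $A_i$ in Jordan form, reduce $\bigotimes_i A_i$ to a direct sum of tensor products of Jordan blocks, invoke Theorem~\ref{Thm,,1} for each such product, and take the l.c.m. The only differences are presentational: the paper treats just $m=2$ and leaves the general case to an unspecified induction, whereas you handle arbitrary $m$ directly and, for the nilpotent summands, compute the index as $\min_{i\in Z}s_i$ via the elementary fact that $A\otimes B=0$ iff $A=0$ or $B=0$, rather than through the $\wedge_\lambda$ operation used in Theorem~\ref{Thm,,1}(3),(5).
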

\begin{proof} There is no loss of generality to consider $m=2$, as the general case follows by induction. Let
\begin{equation*}
J(0)_{1}\oplus J(\alpha_{1})\oplus \cdots\oplus J(\alpha_{n})
 \end{equation*}
and
\begin{equation*}
J(0)_{2}\oplus J(\beta_{1})\oplus\cdots\oplus J(\beta_{q})
\end{equation*}
be respectively the Jordan canonical forms of $A_{1}$ and $A_{2}$ in $\overline{F}$. Since
\begin{align*}
A\otimes B\sim & [J(0)_{1}\otimes J(0)_{2}]\oplus [\oplus_{j=1}^{q}(J(0)_{1}\otimes J(\beta_{j}))]\oplus\\
&[\oplus_{i=1}^{n}(J(\alpha_{i})\otimes J(0)_{2})]\oplus [\oplus_{i,j\geq 1}(J(\alpha_{i})\otimes J(\beta_{j}))],
\end{align*}
it follows, in view of Theorem~\ref{Thm,,1}, that the minimal polynomial of the matrix $A\otimes B$ is $X^{s}Q(X)$ where
\begin{eqnarray*}
s&=&\max\{\iota_{A_{1}}(0)\wedge_{0} \iota_{A_{2}}(0), \iota_{A_{1}}(0)\wedge_{\beta_{j}} \iota_{A_{2}}(\beta_{j}), \iota_{A_{2}}(0)\wedge_{\alpha_{i}} \iota_{A_{1}}(\alpha_{i})/\\
&&1\leq i\leq n \,\text{and}\, 1\leq j\leq q\}\\
&=&\rho
\end{eqnarray*}
and $Q$ is the least common multiple of the polynomials
\begin{equation*}
(X-\alpha_{i}\beta_{j})^{\iota_{A1}(\alpha_{i})\wedge \iota_{A_{2}}(\beta_{j})},\,\, 1\leq i\leq n \,\text{and}\, 1\leq j\leq q
\end{equation*}
which is exactly the polynomial $\Upsilon(Q_{1},Q_{2})$ by definition of this last polynomial. The proof is finished.
\end{proof}
As an immediate consequence of Theorem~$2.13$ of~\cite{Mouc2}, Theorem~\ref{Thm 2453} above and the fact that every monic polynomial is the minimal polynomial of its companion matrix we have the following result
\begin{thrm}
Let $P_{i}, 1\leq i\leq m$ be any monic polynomials over $F$. Then we have
$$\prod_{i=1}^{m}L(P_{i})=L(P)$$
where $P(X)\in F[X]$ is the minimal polynomial of the Kronecker product of the companion matrices of $P_{i}, 1\leq i\leq m$.
\end{thrm}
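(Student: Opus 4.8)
The plan is to observe that both sides of the asserted identity are equal to $L$ of one and the same explicitly described polynomial, so that the two heavy results already quoted — Theorem~2.13 of~\cite{Mouc2} on the product $\prod_{i=1}^{m}L(P_{i})$ and Theorem~\ref{Thm 2453} above on the minimal polynomial of a Kronecker product — do all the work once they are lined up correctly.

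First I would set $A_{i}$ to be the companion matrix of $P_{i}$, $1\le i\le m$. Because a monic polynomial is the minimal polynomial of its companion matrix, we have $\mathcal{M}_{A_{i}}(X)=P_{i}(X)$; writing $P_{i}(X)=X^{\iota_{A_{i}}(0)}Q_{i}(X)$ with $Q_{i}(0)\neq 0$ puts us exactly in the situation of Theorem~\ref{Thm 2453}. That theorem then tells us that the polynomial $P$ in the statement, i.e. the minimal polynomial of $\otimes_{i=1}^{m}A_{i}$, is $P=X^{\rho}\,\Upsilon(Q_{1},\ldots,Q_{m})$, with $\rho$ and $\Upsilon(Q_{1},\ldots,Q_{m})$ built from the root multiplicities of the $P_{i}$ through the operations $\wedge$ and $\wedge_{\lambda}$ exactly as recalled before that theorem.

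Second I would quote Theorem~2.13 of~\cite{Mouc2}: applied to the same monic polynomials $P_{1},\ldots,P_{m}$ it identifies the product space $\prod_{i=1}^{m}L(P_{i})$ with $L\bigl(X^{\rho}\,\Upsilon(Q_{1},\ldots,Q_{m})\bigr)$, the relevant point being that the combinatorics governing termwise products of linear recurrence sequences — the product rules $\langle\lambda\rangle_{i}\langle\mu\rangle_{j}=\langle\lambda\mu\rangle_{i\wedge j}$ of Theorem~\ref{thm 2221} together with the $\langle 0\rangle_{t}\langle\lambda\rangle_{s}$ rule — are the very same combinatorics that enter the definitions of $\rho$ and $\Upsilon$. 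Since the $P$ of the first step and the polynomial of the second step coincide, we conclude $\prod_{i=1}^{m}L(P_{i})=L(P)$.

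The only thing that really needs care is checking that the symbols $\rho$ and $\Upsilon(Q_{1},\ldots,Q_{m})$ produced on the sequence side (Theorem~2.13 of~\cite{Mouc2}) are \emph{literally} the same objects as those produced on the matrix side (Theorem~\ref{Thm 2453}): both are assembled from the multiplicities of the roots of the $P_{i}$ in $\overline{F}$, and those multiplicities are precisely the indexes $\iota_{A_{i}}$ of the eigenvalues of the companion matrices, so this is a matter of matching notation rather than of fresh argument — but it is the place where one must be attentive, all the more so in positive characteristic where, by contrast with Lemma~\ref{lem 197}(1), $i\wedge j$ need not equal $i+j-1$.
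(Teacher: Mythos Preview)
Your proposal is correct and follows essentially the same approach as the paper, which presents the result as an immediate consequence of Theorem~2.13 of~\cite{Mouc2}, Theorem~\ref{Thm 2453}, and the fact that every monic polynomial is the minimal polynomial of its companion matrix. You have simply spelled out in more detail how these three ingredients fit together, including the (appropriate) observation that the notation-matching between the two cited theorems is where one must be attentive.
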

\begin{remark} In their paper \cite{Cerr} U. Cerruti and F. Vaccarino proved that if $P_{i}, 1\leq i\leq m$ are monic polynomials over a commutative ring with identity $R$, then $\prod_{i=1}^{m}L(P_{i})\subseteq L(H)$
where $H(X)\in R[X]$ is the characteristic polynomial of the Kronecker product of the companion matrices of $P_{i}, 1\leq i\leq m$.
\end{remark}
\section{$\mathcal{P}$-canonical form of the exponential of matrices}
The following result shows how to deduce the $\mathcal{P}$-cf relative to $(\mathcal{S}^{\ast}, \mathcal{H})$ of the matrix exponential function $e^{tA}$, $A$ is a square matrix with complex elements, from the $\mathcal{P}$-cf of $A$ relative to $(\mathcal{S}^{\ast}, \mathcal{T})$.
\begin{thrm}\label{thm,30}
Let $A\in M_{q}(\mathbb{C})$ and let
\begin{equation}\label{eqq 1}
\pmb{A}= N(A) + \mathcal{A}_{0}\Lambda_{0} + \cdots + \mathcal{A}_{m}\Lambda_{m}
\end{equation}
be the $\mathcal{P}$-cf relative to $(\mathcal{S}^{\ast}, \mathcal{T})$ of $A$. Then for every $t\in \mathbb{C}$, the $\mathcal{P}$-cf relative to $(\mathcal{S}^{\ast}, \mathcal{H})$ of $\e^{tA}$ can be easily obtained by transforming the $\mathcal{P}$-cf relative to $(\mathcal{S}^{\ast}, \mathcal{T})$ of $A$ using the following substitutions:
\begin{align*}
\pmb{\lambda}\Lambda_{i}&\hookrightarrow \frac{(t\lambda)^{i}\pmb{\e^{t\lambda}}\Gamma^{i}}{i!}\\
\pmb{0}_{i}&\hookrightarrow \frac{t^{i}\Gamma^{i}}{i!}
\end{align*}
\end{thrm}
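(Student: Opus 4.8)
\emph{Proof sketch.} The plan is to reduce to a single Jordan block and then verify the two substitution rules by a direct computation of powers. The preliminary remark is that both operations in play --- passing from $\pmb{A}$ to the sequence of powers $\big((\e^{tA})^{k}\big)_{k\geq 0}$, and applying the formal substitutions $\pmb{\lambda}\Lambda_{i}\hookrightarrow\frac{(t\lambda)^{i}\pmb{\e^{t\lambda}}\Gamma^{i}}{i!}$, $\pmb{0}_{i}\hookrightarrow\frac{t^{i}\Gamma^{i}}{i!}$ --- behave well under similarity and under direct sums. Indeed, if $A=PBP^{-1}$ then $(\e^{tA})^{k}=P(\e^{tB})^{k}P^{-1}$, so the power sequence of $\e^{tA}$ is obtained from that of $\e^{tB}$ by entrywise conjugation; and conjugating the $\mathcal{P}$-cf of $B$ by $P$ gives a decomposition of $\pmb{A}$ whose coefficient matrices still have entries in $F$ (for the $\pmb{0}_{i}$ part) and in $F_{\mathcal{S}^{\ast}}$ (for the $\Lambda_{i}$ part), hence \emph{is} the $\mathcal{P}$-cf of $A$ by uniqueness. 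Since the substitutions are linear in the coefficient matrices and leave the sequences untouched they commute with conjugation, and since they act entrywise they respect the block structure of a direct sum. So it suffices to treat $A=J_{s}(\lambda)$.

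For $\lambda\neq 0$, write $J_{s}(\lambda)=\lambda I+N$ with $N$ the nilpotent shift. From $J_{s}(\lambda)^{k}=\sum_{j=0}^{s-1}\binom{k}{j}\lambda^{k-j}N^{j}$ and $\binom{k}{j}\lambda^{k-j}=\lambda^{-j}\binom{k}{j}\lambda^{k}$, the $(i,i+j)$ entry of $\pmb{A}$ is $\lambda^{-j}\pmb{\lambda}\Lambda_{j}$; thus $N(A)=0$ and the $\mathcal{P}$-cf of $A$ is $\pmb{A}=\sum_{j=0}^{s-1}\lambda^{-j}N^{j}\,\pmb{\lambda}\Lambda_{j}$. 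On the other hand $(\e^{tJ_{s}(\lambda)})^{k}=\e^{kt\lambda}\e^{ktN}=\sum_{j\geq 0}\frac{t^{j}}{j!}k^{j}\e^{kt\lambda}N^{j}$; since $\e^{kt\lambda}=(\e^{t\lambda})^{k}$ is the nonzero geometric sequence $\pmb{\e^{t\lambda}}$ and $(k^{j})_{k\geq 0}=\Gamma^{j}$, this is already the $\mathcal{P}$-cf relative to $(\mathcal{S}^{\ast},\mathcal{H})$, namely $\big((\e^{tA})^{k}\big)_{k}=\sum_{j=0}^{s-1}\frac{t^{j}}{j!}N^{j}\,\pmb{\e^{t\lambda}}\Gamma^{j}$, with vanishing non-geometric part. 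Applying $\pmb{\lambda}\Lambda_{j}\hookrightarrow\frac{(t\lambda)^{j}\pmb{\e^{t\lambda}}\Gamma^{j}}{j!}$ to the $j$-th term of the $\mathcal{P}$-cf of $A$ gives $\lambda^{-j}N^{j}\cdot\frac{(t\lambda)^{j}\pmb{\e^{t\lambda}}\Gamma^{j}}{j!}=\frac{t^{j}}{j!}N^{j}\,\pmb{\e^{t\lambda}}\Gamma^{j}$, which is exactly the $j$-th term above. For $\lambda=0$, $J_{s}(0)=N$ gives $\pmb{A}=\sum_{j=0}^{s-1}N^{j}\,\pmb{0}_{j}$ (purely non-geometric), while $(\e^{tN})^{k}=\sum_{j\geq 0}\frac{t^{j}}{j!}k^{j}N^{j}$ has $\mathcal{P}$-cf $\sum_{j=0}^{s-1}\frac{t^{j}}{j!}N^{j}\Gamma^{j}$; the rule $\pmb{0}_{j}\hookrightarrow\frac{t^{j}\Gamma^{j}}{j!}$ matches term by term, and --- consistently with $\e^{tA}$ being invertible --- sends the non-geometric part to $0$.

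To conclude, for an arbitrary $A\in M_{q}(\mathbb{C})$ choose $P$ with $P^{-1}AP=\bigoplus_{r}J_{s_{r}}(\lambda_{r})$, apply the block case to each summand, sum over $r$, and conjugate back by $P$; the compatibilities of the first paragraph together with the uniqueness of the $\mathcal{P}$-cf of $\e^{tA}$ relative to $(\mathcal{S}^{\ast},\mathcal{H})$ then identify the substituted expression with that $\mathcal{P}$-cf. There is no essential difficulty in the argument: the statement mostly unpacks $(\e^{tA})^{k}=\e^{ktA}$ and the definitions. The one point requiring care --- the main obstacle --- is making the substitution rigorous and showing it is well defined: one must expand each coefficient matrix $\mathcal{A}_{i}$ into its unique geometric components $\sum_{\mu}\pmb{\mu}\,M_{i,\mu}$, apply $\pmb{\mu}\Lambda_{i}\hookrightarrow\frac{(t\mu)^{i}\pmb{\e^{t\mu}}\Gamma^{i}}{i!}$, and check that the result does not depend on how the $\mathcal{P}$-cf is grouped and is compatible with conjugation. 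Once this is settled, the block computation finishes the proof. (Note the $\mathcal{P}$-cf relative to $(\mathcal{S}^{\ast},\mathcal{H})$ exists since $\mathbb{C}$ has characteristic $0$.)
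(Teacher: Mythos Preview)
Your argument is correct, but it follows a genuinely different route from the paper's proof. The paper works intrinsically with the spectral-projection form of the $\mathcal{P}$-cf: invoking Proposition~3.1 of~\cite{Mouc1} it writes
\[
\pmb{A}=\sum_{i=0}^{t_{0}-1}\pmb{0}_{i}A^{i}\pi_{0}+\sum_{i}\sum_{j}\pmb{\lambda}_{j}\lambda_{j}^{-i}(A-\lambda_{j}I_{q})^{i}\pi_{j}\Lambda_{i},
\]
computes $\pmb{tA}$, divides by $k!$ and sums the exponential series to recover the classical closed form for $\e^{tA}$, and then uses $(\e^{tA})^{k}=\e^{ktA}$ to read off the $\mathcal{P}$-cf of $\e^{tA}$; the substitution rules fall out by comparing the two formulas term by term. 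Your proof instead externalizes the structure via the Jordan normal form: you first argue that both the passage $\pmb{A}\mapsto\pmb{\e^{tA}}$ and the formal substitution are compatible with similarity and direct sums (so it suffices to treat a single block), and then verify the two rules by a bare-hands computation on $J_{s}(\lambda)$ and $J_{s}(0)$. What this buys you is that the argument is self-contained---it does not appeal to the spectral-projection proposition from the companion paper---and the block computation is transparent. What the paper's approach buys is that it never leaves the $\mathcal{P}$-cf formalism and, as a by-product, re-derives the standard expression $\e^{tA}=\sum_{i}\frac{t^{i}}{i!}A^{i}\pi_{0}+\sum_{i,j}\frac{t^{i}}{i!}\e^{t\lambda_{j}}(A-\lambda_{j}I_{q})^{i}\pi_{j}$ along the way. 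Your closing remark that the only real care point is making the substitution well defined (by decomposing each $\mathcal{A}_{i}$ into its geometric components) is apt and matches the level of rigor in the paper; neither proof dwells on the degenerate possibility that distinct eigenvalues $\lambda_{j}$ have coinciding $\e^{t\lambda_{j}}$ for special $t$, in which case the substituted expression is still correct but may require a trivial regrouping to be literally in canonical form.
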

\begin{proof}
Let $\mathcal{M}_{A}(X)=X^{t_{0}}\prod_{j=1}^{p}(X-\lambda_{j})^{t_{j}}$ (possibly $t_{0}=0$), be the minimal polynomial of $A$ and let $t\in \mathbb{C}$. Let $\pi_{0},\ldots,\pi_{p}$ be the spectral projections of $A$ at $0,\lambda_{1},\ldots,\lambda_{p}$ respectively. From Proposition~$3.1$ of~\cite{Mouc1}, we have
\begin{equation*}
\pmb{A}=\sum_{i=0}^{t_{0}-1}\pmb{0}_{i}A^{i}\pi_{0}+
\sum_{i=0}^{m}\sum_{j=1}^{p}\pmb{\lambda}_{j}\lambda_{j}^{-i}(A-\lambda_{j}I_{q})^{i}\pi_{j}\Lambda_{i}
\end{equation*}
Then
\begin{equation*}
\begin{split}
\pmb{tA}&=\sum_{i=0}^{t_{0}-1}\pmb{0}_{i}\pmb{t}A^{i}\pi_{0}+
\sum_{i=0}^{m}\sum_{j=1}^{p}\pmb{t\lambda}_{j}\lambda_{j}^{-i}(A-\lambda_{j}I_{q})^{i}\pi_{j}\Lambda_{i}\\
&=\sum_{i=0}^{t_{0}-1}\pmb{0}_{i}t^{i}A^{i}\pi_{0}+
\sum_{i=0}^{m}\sum_{j=1}^{p}\pmb{t\lambda}_{j}(t\lambda_{j})^{-i}t^{i}(A-\lambda_{j}I_{q})^{i}\pi_{j}\Lambda_{i},
\end{split}
\end{equation*}
where $m=\max\{t_{1}-1,\ldots,t_{p}-1\}$. Dividing by $k!$, we obtain
$$\frac{(tA)^{k}}{k!}=\sum_{i=0}^{t_{0}-1}\delta_{i,k}\frac{t^{i}}{k!}A^{i}\pi_{0}+
\sum_{i=0}^{m}\sum_{j=1}^{p}\frac{1}{k!}(t\lambda_{j})^{k-i}\binom{k}{i}t^{i}(A-\lambda_{j}I_{q})^{i}\pi_{j}.$$
Therefore
\begin{equation*}
\begin{split}
\e^{tA}&=\sum_{i=0}^{t_{0}-1}\sum_{k=0}^{\infty}\delta_{i,k}\frac{t^{i}}{k!}A^{i}\pi_{0}+
\sum_{i=0}^{m}\sum_{j=1}^{p}\sum_{k=i}^{\infty}\frac{1}{k!}(t\lambda_{j})^{k-i}\binom{k}{i}t^{i}(A-\lambda_{j}I_{q})^{i}\pi_{j}\\
&=\sum_{i=0}^{t_{0}-1}\frac{t^{i}}{i!}A^{i}\pi_{0}+
\sum_{i=0}^{m}\sum_{j=1}^{p}\sum_{k=i}^{\infty}\frac{t^{i}}{i!}\frac{(t\lambda_{j})^{k-i}}{(k-i)!}(A-\lambda_{j}I_{q})^{i}\pi_{j}\\
&=\sum_{i=0}^{t_{0}-1}\frac{t^{i}}{i!}A^{i}\pi_{0}+
\sum_{i=0}^{m}\sum_{j=1}^{p}\frac{t^{i}}{i!}\e^{t\lambda_{j}}(A-\lambda_{j}I_{q})^{i}\pi_{j}
\end{split}
\end{equation*}
which is a well known result. But since $(\e^{tA})^{k}=\e^{ktA}$ for all $k\geq 0$, it follows that
\begin{equation}\label{eq 001}
\begin{split}
\pmb{\e^{tA}}&=\sum_{i=0}^{t_{0}-1}\frac{t^{i}\Gamma^{i}}{i!}A^{i}\pi_{0}+
\sum_{j=1}^{p}\pmb{\e^{t\lambda_{j}}}(A-\lambda_{0}I_{q})^{0}\pi_{j}
\frac{t^{0}\Gamma^{0}}{0!}+
\cdots\\
&+\sum_{j=1}^{p}\pmb{\e^{t\lambda_{j}}}(A-\lambda_{j}I_{q})^{m}\pi_{j}\frac{t^{m}\Gamma^{m}}{m!}.
\end{split}
\end{equation}
So
\begin{equation*}
\begin{split}
\pmb{\e^{tA}}&=\sum_{j=1}^{p}(\pmb{\e^{t\lambda_{j}}}(A-\lambda_{0}I_{q})^{0}\pi_{j}+A^{0}\pi_{0})
\frac{t^{0}}{0!}\Gamma^{0}+\cdots\\
&+\sum_{j=1}^{p}(\pmb{\e^{t\lambda_{j}}}(A-\lambda_{j}I_{q})^{d}\pi_{j}+A^{d}\pi_{0})\frac{t^{d}}{d!}\Gamma^{d},\end{split}
\end{equation*}
where $d=\max\{m,t_{0}-1\}$, which is the $\mathcal{P}$-cf relative to $(\mathcal{S}^{\ast}, \mathcal{H})$ of $\e^{tA}$.
To conclude, all that remains is to compare equation~\ref{eq 001} with the $\mathcal{P}$-cf of $A$ relative to $(\mathcal{S}^{\ast}, \mathcal{T})$.
\end{proof}
The following Theorem is an interesting consequence of Theorem~\ref{thm,30}.
\begin{thrm} Let $A$ be a nonsingular complex matrix such that $\chi_{A}\in \mathbb{R}[X]$. Let
\begin{equation*}
\pmb{A}=N(A) + \mathcal{A}_{0}\Lambda_{0} + \cdots + \mathcal{A}_{m}\Lambda_{m}
\end{equation*}
be the $\mathcal{P}$-cf relative to $(\mathcal{S}^{\ast}, \mathcal{T})$ of $A$.
Suppose that the sequences appearing in $\mathcal{A}_{0},\ldots,\mathcal{A}_{m}$ are
\begin{equation*}
r_{i}^{k}\cos(k\theta) \quad(\text{or}\quad r_{i}^{k}\sin(k\theta)), 1\leq i\leq s, \quad\text{and}\quad \pmb{\lambda}_{i}, 1\leq i\leq l
\end{equation*}
Then we have
\begin{enumerate}[(1)]
\item $\mu_{1}=r_{1}\e^{\mathrm{i}\theta_{1}},\ldots,\mu_{s}=r_{1}\e^{\mathrm{i}\theta_{s}},\overline{\mu}_{1}=r_{1}\e^{-\mathrm{i}\theta_{1}},
\ldots,\overline{\mu}_{s}=r_{s}\e^{-\mathrm{i}\theta_{s}},\lambda_{1},\ldots,\lambda_{l}$ are the eigenvalues, not necessarily distinct, of $A$.
\item The $\mathcal{P}$-cf relative to $(\mathcal{S}^{\ast}, \mathcal{H})$ of the matrix $\e^{tA}$ can be easily obtained by transforming the $\mathcal{P}$-cf relative to $(\mathcal{S}^{\ast}, \mathcal{T})$ of $A$ using the following substitutions:
\begin{align*}
\Lambda_{j}(r_{i}^{k}\cos(k\theta_{i}))_{k}&\hookrightarrow
\operatorname{Re}(f_{j}(tk\mu_{i}))_{k}&\\
\Lambda_{j}(r_{i}^{k}\sin(k\theta_{i}))_{k}&\hookrightarrow \operatorname{Im}(f_{j}(tk\mu_{i}))_{k}&\\
\pmb{\lambda_{i}}\Lambda_{j}&\hookrightarrow \frac{(t\lambda_{i})^{j}\pmb{\e^{t\lambda_{i}}}\Gamma^{j}}{j!},& 1\leq i\leq l\\
\pmb{0}_{j}&\hookrightarrow \displaystyle\frac{t^{j}\Gamma^{j}}{j!}&
\end{align*}
where for all $z\in \mathbb{C}$,
\begin{eqnarray*}
f_{j}(z)=\begin{cases}
\frac{z^{j}}{j!}\e^{z} & \text{if}\quad j\neq 0\\
\e^{z} &\text{if}\quad j=0
\end{cases}
\end{eqnarray*}
\end{enumerate}
\end{thrm}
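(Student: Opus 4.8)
The plan is to obtain both assertions by transporting the tools already in hand — Lemma~\ref{lem 325} and, above all, Theorem~\ref{thm,30} — through the elementary dictionary that turns a conjugate pair of geometric sequences into the real sequences $r^{k}\cos(k\theta)$ and $r^{k}\sin(k\theta)$. Writing $\mu_{i}=r_{i}\e^{\mathrm{i}\theta_{i}}$, the dictionary consists of the pointwise identities
\begin{equation*}
\bigl(r_{i}^{k}\cos(k\theta_{i})\bigr)_{k}=\tfrac12\bigl(\pmb{\mu}_{i}+\pmb{\overline{\mu}}_{i}\bigr),\qquad \bigl(r_{i}^{k}\sin(k\theta_{i})\bigr)_{k}=\tfrac{1}{2\mathrm{i}}\bigl(\pmb{\mu}_{i}-\pmb{\overline{\mu}}_{i}\bigr),
\end{equation*}
together with the uniqueness of the $\mathcal{P}$-cf recalled in the introduction.

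\emph{Assertion (1).} Since $A$ is nonsingular, $N(A)=0$, and by Lemma~\ref{lem 325} the geometric sequences $\pmb{\nu}$ occurring in $\mathcal{A}_{0},\ldots,\mathcal{A}_{m}$ are exactly the $\pmb{\nu}$ with $\nu$ an eigenvalue of $A$ (the exponents $\iota_{A}(\nu)$ are recovered as in that lemma, but here only the supports are needed). On the other hand, the stated hypothesis together with the two identities above says precisely that every entry of every $\mathcal{A}_{j}$ is a $\mathbb{C}$-linear combination of $\pmb{\mu}_{1},\pmb{\overline{\mu}}_{1},\ldots,\pmb{\mu}_{s},\pmb{\overline{\mu}}_{s},\pmb{\lambda}_{1},\ldots,\pmb{\lambda}_{l}$. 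Comparing the two descriptions of the $\mathcal{A}_{j}$ and using the $\mathbb{C}$-linear independence of pairwise distinct geometric sequences then forces the set of eigenvalues of $A$ to be $\{\mu_{i},\overline{\mu}_{i}:1\leq i\leq s\}\cup\{\lambda_{i}:1\leq i\leq l\}$, which is (1). (It is the hypothesis $\chi_{A}\in\mathbb{R}[X]$ that makes the nonreal eigenvalues genuinely pair up as $\mu_{i},\overline{\mu}_{i}$, i.e. that makes this real normal form of the $\mathcal{A}_{j}$ available.)

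\emph{Assertion (2).} Here I would feed the same dictionary into Theorem~\ref{thm,30}. That theorem produces the $\mathcal{P}$-cf of $\e^{tA}$ relative to $(\mathcal{S}^{\ast},\mathcal{H})$ from that of $A$ by the \emph{linear} substitutions $\pmb{\nu}\Lambda_{j}\hookrightarrow\frac{(t\nu)^{j}}{j!}\pmb{\e^{t\nu}}\Gamma^{j}$ and $\pmb{0}_{j}\hookrightarrow\frac{t^{j}}{j!}\Gamma^{j}$ (the latter vacuous, $A$ being nonsingular). As a sequence, $\frac{(t\nu)^{j}}{j!}\pmb{\e^{t\nu}}\Gamma^{j}=\bigl(\tfrac{(tk\nu)^{j}}{j!}\e^{tk\nu}\bigr)_{k}=\bigl(f_{j}(tk\nu)\bigr)_{k}$, and for $j=0$ it is $(\e^{tk\nu})_{k}=(f_{0}(tk\nu))_{k}$. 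Applying this to $\Lambda_{j}\bigl(r_{i}^{k}\cos(k\theta_{i})\bigr)_{k}=\tfrac12\bigl(\pmb{\mu}_{i}\Lambda_{j}+\pmb{\overline{\mu}}_{i}\Lambda_{j}\bigr)$ yields the sequence $\tfrac12\bigl(f_{j}(tk\mu_{i})+f_{j}(tk\overline{\mu}_{i})\bigr)_{k}=\bigl(\operatorname{Re}f_{j}(tk\mu_{i})\bigr)_{k}$ via $f_{j}(tk\overline{\mu}_{i})=\overline{f_{j}(tk\mu_{i})}$, and likewise $\Lambda_{j}\bigl(r_{i}^{k}\sin(k\theta_{i})\bigr)_{k}\hookrightarrow\bigl(\operatorname{Im}f_{j}(tk\mu_{i})\bigr)_{k}$, while $\pmb{\lambda}_{i}\Lambda_{j}$ and $\pmb{0}_{j}$ are left exactly as in Theorem~\ref{thm,30}. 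Performing these substitutions entrywise on $\mathcal{A}_{0},\ldots,\mathcal{A}_{m}$ therefore gives the same output as Theorem~\ref{thm,30}, namely the $\mathcal{P}$-cf of $\e^{tA}$ relative to $(\mathcal{S}^{\ast},\mathcal{H})$, which is the assertion.

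The one step I expect to need care is exactly this last compatibility: that the purely formal substitution rule of Theorem~\ref{thm,30}, applied to the complex geometric pieces $\pmb{\mu}_{i}\Lambda_{j}$ and $\pmb{\overline{\mu}}_{i}\Lambda_{j}$, repackages correctly into real and imaginary parts — that is, the identity $f_{j}(tk\overline{\mu}_{i})=\overline{f_{j}(tk\mu_{i})}$. It holds because $t$ is real and $f_{j}$ has real Taylor coefficients; for complex $t$ one simply reads $\operatorname{Re}$ and $\operatorname{Im}$ in the statement as the combinations $\tfrac12\bigl(f_{j}(tk\mu_{i})\pm f_{j}(tk\overline{\mu}_{i})\bigr)$. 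Everything else is bookkeeping: uniqueness of the $\mathcal{P}$-cf, $\mathbb{C}$-linear independence of distinct geometric sequences, and the linearity of the substitution in Theorem~\ref{thm,30}.
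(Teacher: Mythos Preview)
Your proposal is correct and follows essentially the same route as the paper: the paper's own proof says only that the result is a straightforward application of Theorem~\ref{thm,30} together with the identities $r^{k}\cos(k\theta)=\tfrac12(\mu^{k}+\overline{\mu}^{k})$ and $r^{k}\sin(k\theta)=\tfrac{1}{2\mathrm{i}}(\mu^{k}-\overline{\mu}^{k})$, which is exactly your ``dictionary''. You supply more detail than the paper does --- in particular you justify assertion~(1) separately via Lemma~\ref{lem 325} and note that $N(A)=0$ makes the $\pmb{0}_{j}$-rule vacuous --- but the underlying argument is the same.
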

\begin{proof}
The proof is a straightforward application of Theorem~\ref{thm,30}, and the fact that if $\mu=r\e^{\mathrm{i}\theta}$ then
$$r^{k}\cos(k\theta)=\frac{\mu^{k}+\overline{\mu}^{k}}{2}$$
and
$$r^{k}\sin(k\theta)=\frac{\mu^{k}-\overline{\mu}^{k}}{2\mathrm{i}}.$$
\end{proof}
\section{$\mathcal{P}$-canonical form of the logarithm of matrices}
It is well known that a complex matrix $A$ has a logarithm if and only if $A$ is nonsingular. The following theorem shows that knowing the $\mathcal{P}$-cf relative to $(\mathcal{S}^{\ast}, \mathcal{T})$ (or $(\mathcal{S}^{\ast}, \mathcal{H})$) of a nonsingular matrix $A$, we can derive a logarithm of it.
\begin{thrm}\label{thm,22} Let $A$ be a nonsingular matrix of order $q$ over $\mathbb{C}$ and let
\begin{equation*}
\lambda_{1}=\e^{z_{1}},\ldots,\lambda_{s}=\e^{z_{s}}
\end{equation*}
its eigenvalues. Let us denote by $(A(k))_{k\geq0}$ the $\mathcal{P}$-cf relative to $(\mathcal{S}^{\ast}, \mathcal{T})$ of $A$, and consider the matrix complex-valued smooth function $A(t)$ of a real variable $t$ obtained by plugging the variable $t$ for $k$ in $A(k)$. Then we have
\begin{enumerate}[(1)]
\item The derivative $A'(0)$ of $A(t)$ at $t=0$ is a matrix logarithm of $A$.
\item The $\mathcal{P}$-cf relative to $(\mathcal{S}^{\ast}, \mathcal{T})$ of $A'(0)$ can be easily obtained by transforming the $\mathcal{P}$-cf relative to $(\mathcal{S}^{\ast}, \mathcal{H})$ of $A$ using the following substitutions:
$$\begin{array}{ccccc}
\pmb{\lambda_{j}}\Gamma^{i}&\hookrightarrow & i!\pmb{z_{j}}z_{j}^{-i}\Lambda_{i}&\text{if}& z_{j}\neq 0\\
\pmb{\lambda_{j}}\Gamma^{i}&\hookrightarrow & i!\pmb{0}_{i}&\text{if}& z_{j}=0
\end{array}$$
\item  The eigenvalues of $A'(0)$ are $z_{1},\ldots,z_{s}$. More precisely, if $\chi_{A}=(X-\lambda_{1})^{m_{1}} \,\cdots\, (X-\lambda_{s})^{m_{s}}$ is the characteristic polynomial of $A$, then $A'(0)$ is the unique logarithm of $A$ with characteristic polynomial $\chi_{A'(0)}=(X-z_{1})^{m_{1}} \,\cdots\, (X-z_{s})^{m_{s}}$.
\item If $A$ has nonnegative real eigenvalues, then the principal logarithm of $A$ is equal to the matrix $A'(0)$ obtained when we choose the $z_{i}$ to be in the strip $\{z\in \mathbb{C}/ -\pi< Im(z)< \pi\}$.
\end{enumerate}
\end{thrm}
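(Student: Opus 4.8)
The plan is to base everything on Theorem~\ref{thm,30}, which already tells us how to pass between the $\mathcal{P}$-cf of $A$ relative to $(\mathcal{S}^{\ast},\mathcal{T})$ and the $\mathcal{P}$-cf of $\e^{tA}$ relative to $(\mathcal{S}^{\ast},\mathcal{H})$. The idea behind $(1)$ and $(2)$ is that this correspondence is, in the appropriate sense, invertible: if $B$ is \emph{some} logarithm of $A$, then $\e^{tB}$ has $\e^{B}=A$ as its value at $t=1$ and, more to the point, $\frac{d}{dt}\big|_{t=0}\e^{tB}=B$. So if I can show that the function $t\mapsto A(t)$ (obtained by substituting the real variable $t$ for the discrete index $k$ in the $\mathcal{P}$-cf $(A(k))_{k\ge0}$ of $A$ relative to $(\mathcal{S}^{\ast},\mathcal{T})$) coincides with $\e^{tB}$ for a suitable constant matrix $B$, then $A'(0)=B$ is automatically a logarithm of $A$, and its $\mathcal{P}$-cf is read off by differentiating. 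Concretely, I would start from the explicit formula proved inside Theorem~\ref{thm,30},
\begin{equation*}
A(k)=\sum_{i=0}^{t_{0}-1}\delta_{i,k}A^{i}\pi_{0}+\sum_{i=0}^{m}\sum_{j=1}^{p}\lambda_{j}^{k-i}\binom{k}{i}(A-\lambda_{j}I_{q})^{i}\pi_{j},
\end{equation*}
replace $k$ by the continuous variable $t$ (interpreting $\lambda_{j}^{t}=\e^{tz_{j}}$ with the chosen logarithms $z_{j}$, and $\binom{t}{i}$ as a polynomial in $t$), and compute $\frac{d}{dt}\big|_{t=0}$ termwise. The $\delta_{i,t}$ terms contribute nothing at $t=0$ beyond $i=0,1$ handling (here one uses that $A$ nonsingular forces $t_{0}=0$, so that branch is empty), and differentiating $\e^{tz_{j}}\binom{t}{i}$ at $t=0$ produces exactly the coefficients $z_{j}$ on the $\pi_{j}$ blocks together with the nilpotent corrections, giving $A'(0)=\sum_{j=1}^{p}\big(z_{j}\pi_{j}+(A-\lambda_{j}I_{q})\pi_{j}/\lambda_{j}+\cdots\big)$, i.e.\ precisely the standard Jordan-block logarithm. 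This simultaneously proves $(1)$ and yields the substitution table in $(2)$ after matching against the defining form of the $\mathcal{P}$-cf relative to $(\mathcal{S}^{\ast},\mathcal{T})$; the substitution $\pmb{\lambda_{j}}\Gamma^{i}\hookrightarrow i!\,\pmb{z_{j}}z_{j}^{-i}\Lambda_{i}$ is just the record of $\frac{d}{dt}\big|_{0}\e^{tz_{j}}t^{i}=i!\,z_{j}^{-i}z_{j}^{k}\binom{k}{i}\big|$ re-expressed, and the $z_{j}=0$ case is the limiting nilpotent version landing on $\pmb{0}_{i}$.

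For $(3)$ I would argue as follows. By construction $A'(0)$ acts on the spectral subspace $\operatorname{im}\pi_{j}$ as $z_{j}I+N_{j}$ where $N_{j}=(A-\lambda_{j}I_{q})\pi_{j}/\lambda_{j}$ is nilpotent of the same index as $A-\lambda_{j}I_{q}$ on that block (this is immediate from the series, since the leading term is $z_{j}$ and the higher terms are polynomials in the nilpotent part with no constant term). Hence the characteristic polynomial of $A'(0)$ on that block is $(X-z_{j})^{m_{j}}$, and globally $\chi_{A'(0)}=\prod_{j}(X-z_{j})^{m_{j}}$. Uniqueness then follows from the standard fact that a logarithm $B$ of $A$ is completely determined once one specifies, consistently with $\e^{B}=A$, the eigenvalue of $B$ on each spectral subspace of $A$: any two logarithms with the same characteristic polynomial must have the same eigenvalue $z_{j}$ on each $\operatorname{im}\pi_{j}$ (the eigenvalues being forced to be among the prescribed $z_{j}$ with the prescribed multiplicities), and then $B$ and $B'$ agree because each is the unique matrix of the form $z_{j}I+(\text{polynomial in }A-\lambda_{j}I)$ satisfying $\e^{\bullet}=\lambda_{j}(I+N_{j})$ on that block. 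I would phrase this last step via the power series for $\log(1+x)$, which gives a canonical polynomial-in-$N_{j}$ expression for the nilpotent correction, so the only freedom is the constant $z_{j}$, already pinned down by $\chi_{A'(0)}$.

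Finally $(4)$ is a specialization of $(3)$: if $A$ has nonnegative real eigenvalues then in particular none of them is negative real, so each $\lambda_{j}$ admits a logarithm $z_{j}$ with $\operatorname{Im}(z_{j})\in(-\pi,\pi)$, and this choice is exactly the one defining the principal branch; the principal logarithm is by definition the logarithm whose eigenvalues lie in that strip, so by the uniqueness in $(3)$ it equals the $A'(0)$ built from those $z_{j}$. The main obstacle I anticipate is making the passage ``substitute $t$ for $k$'' rigorous and checking that termwise differentiation is legitimate --- the series over $k$ in Theorem~\ref{thm,30} is an infinite series, but once $k$ is replaced by a real variable each of the finitely many $\mathcal{P}$-cf blocks is a finite sum of terms $\e^{tz_{j}}\binom{t}{i}$ (times constant matrices), so smoothness and termwise differentiation at $t=0$ are unproblematic; one just has to be careful that the object $A(t)$ is the one coming from the \emph{finite} $\mathcal{P}$-cf expansion, not from the formal power series $\sum A^{k}t^{k}/k!$. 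A secondary point requiring care is the nonsingularity hypothesis: it guarantees $t_{0}=0$ so that no genuine $\pmb{0}_{i}$ terms occur in the $\mathcal{P}$-cf of $A$ itself (they only appear on the logarithm side, via the $z_{j}=0$ substitution when some eigenvalue logarithm vanishes, i.e.\ when $1$ is an eigenvalue of $A$), and this needs to be stated explicitly so the substitution table in $(2)$ is interpreted correctly.
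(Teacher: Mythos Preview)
Your approach is essentially the paper's. For (1) both you and the paper differentiate the explicit $\mathcal{P}$-cf expression termwise at $t=0$, using $\binom{t}{i}'(0)=\frac{(-1)^{i-1}}{i}$ and $(\e^{tz_j})'(0)=z_j$, to obtain
\[
A'(0)=\sum_{j}z_{j}\pi_{j}+\sum_{i\ge 1}\frac{(-1)^{i-1}}{i}\sum_{j}\lambda_{j}^{-i}(A-\lambda_{j}I_{q})^{i}\pi_{j},
\]
and then recognize this as a logarithm via the truncated Mercator series on each block --- the paper spells this out by setting $C_{j}=\sum_{i\ge1}\frac{(-1)^{i-1}}{i}(\lambda_{j}^{-1}A-I_{q})^{i}\pi_{j}$, identifying $\e^{C_{j}}=I_{q}+(\lambda_{j}^{-1}A-I_{q})\pi_{j}$, and multiplying, which is exactly what you abbreviate as ``the standard Jordan-block logarithm.'' Your initial framing (first showing $A(t)=\e^{tB}$ for some $B$) is not actually carried out, nor needed; you correctly fall back to the direct computation. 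For (2) and (4) the arguments coincide. The one genuine difference is (3): the paper reads the eigenvalues of $A'(0)$ off its $\mathcal{P}$-cf (just obtained in (2)) by invoking Corollary~3.6 of~\cite{Mouc1}, whereas you argue directly from the block decomposition $A'(0)\big|_{\operatorname{im}\pi_{j}}=z_{j}I+(\text{nilpotent})$ and then handle uniqueness by the standard observation that the nilpotent correction is forced by the Mercator series once the scalar $z_{j}$ is fixed. Your route to (3) is more elementary and self-contained; the paper's stays inside the $\mathcal{P}$-cf formalism but requires the external reference.
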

\begin{proof}~
\begin{enumerate}[(1)]
\item Let $\binom{t}{i}=\displaystyle\frac{t(t-1)\cdots(t-i+1)}{i!}$ be the binomial polynomial of degree $i$. Since, for $i\geq 1$, the coefficient of $t$ in that polynomial is $\displaystyle\frac{(-1)^{i-1}}{i}$, it follows that the derivative of $\binom{t}{i}$ at $0$ is $\binom{t}{i}'(0)=\displaystyle\frac{(-1)^{i-1}}{i}$ for all integer $i\geq 1$. On the other hand, we have
\begin{equation*}
\mathcal{A}_{0}'(0)=z_{1}\pi_{1}+\cdots+z_{s}\pi_{s}.
\end{equation*}
Hence
\begin{eqnarray*}
A'(0)&=&z_{1}\pi_{1}+\cdots+z_{s}\pi_{s}+\sum_{i=1}^{m} \frac{(-1)^{i-1}}{i}\mathcal{A}_{i}(0)\\
&=&z_{1}\pi_{1}+\cdots+z_{s}\pi_{s}+\sum_{i=1}^{m} \frac{(-1)^{i-1}}{i}\sum_{j=1}^{s}\lambda_{j}^{-i}(A-\lambda_{j}I_{q})^{i}\pi_{j}.
\end{eqnarray*}
Since the matrix $(\lambda_{j}^{-1}A-I_{q})\pi_{j})^{m+1}=0$, it follows that
\begin{equation*}
C_{j}=\displaystyle\sum_{i=1}^{m} \frac{(-1)^{i-1}}{i}(\lambda_{j}^{-1}A-I_{q})^{i}\pi_{j}
\end{equation*}
is a logarithm of $I_{q}+(\lambda_{j}^{-1}A-I_{q})\pi_{j}$. Consequently, as the $C_{j}$s are pairwise commuting matrices, we have $C=\displaystyle\sum_{j=1}^{s}C_{j}$ is a logarithm of
\begin{eqnarray*}
\prod_{j=1}^{s}(I_{q}+(\lambda_{j}^{-1}A-I_{q})\pi_{j})&=&I_{q}+\sum_{j=1}^{s}(\lambda_{j}^{-1}A-I_{q})\pi_{j}\\
&=&A(\sum_{j=1}^{s}\lambda_{j}^{-1}\pi_{j})\\
&=&A\e^{-(z_{1}\pi_{1}+\cdots+z_{s}\pi_{s})}.
\end{eqnarray*}
Hence
\begin{equation*}
A=\e^{C}\e^{-(z_{1}\pi_{1}+\cdots+z_{s}\pi_{s})}.
\end{equation*}
Finally, since the matrix $C$ commutes with $z_{1}\pi_{1}+\cdots+z_{s}\pi_{s}$, it follows that $A'(0)$ is a logarithm of $A$.
\item It is sufficient to use the argument of Proposition~\ref{thm,30} in reverse direction.
\item Follows from property $(2)$ above and Corollary~(3.6) of~\cite{Mouc1} which allows us to deduce the minimal polynomial, and in particular the eigenvalues, of a matrix from its $\mathcal{P}$-cf
\item Follows directly from property $(3)$ above.
\end{enumerate}
\end{proof}
As a consequence of Theorem~\ref{thm,22} we have the following result.
\begin{cor}\label{corl,1} Let $A$ be a nonsingular matrix of order $q$ over $\mathbb{C}$
and let $\lambda_{1}=\e^{z_{1}},\ldots,\lambda_{s}=\e^{z_{s}}$ its eigenvalues. Let $P_{1,ij}(X),\ldots,P_{s,ij}(X)\in \mathbb{C}[X]$ such that
$$(A^{k})_{ij}=P_{1,ij}(k)\lambda_{1}^{k}+\ldots+P_{s,ij}(k)\lambda_{s}^{k}$$
for all $k\geq 0$. Here, $(A^{k})_{ij}$ denotes the (i,j)-th entry of the matrix $A^{k}$. Assume
$$\begin{array}{ccc}
P_{1,ij}(X)&=&p_{1,ij,0}+p_{1,ij,1}X+\cdots\\
\vdots&&\vdots\\
P_{s,ij}(X)&=&p_{s,ij,0}+p_{s,ij,1}X+\cdots
\end{array}$$
Then the matrix
$$(\sum_{e=1}^{s}p_{e,ij,0}+\sum_{e=1}^{s}p_{e,ij,1}z_{e})_{ij}$$
is a logarithm of $A$.
\end{cor}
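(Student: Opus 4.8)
The plan is to read the statement straight off Theorem~\ref{thm,22}(1), which says that the derivative $A'(0)$ of the smooth matrix function $A(t)$ — obtained by substituting the real variable $t$ for $k$ in the $\mathcal{P}$-cf of $A$ relative to $(\mathcal{S}^{\ast},\mathcal{T})$ — is a logarithm of $A$. So it is enough to compute the $(i,j)$ entry of $A'(0)$ in terms of the numbers $p_{e,ij,0}$ and $p_{e,ij,1}$.

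First I would identify $A(t)$ entrywise with the function $t\mapsto\sum_{e=1}^{s}P_{e,ij}(t)\,\e^{z_{e}t}$. Since $A$ is nonsingular, $0$ is not an eigenvalue, so the $\mathcal{P}$-cf of $A$ has no non-geometric part and each sequence $(A^{k})_{ij}$ is an $F$-linear combination of the sequences $\bigl(\lambda_{e}^{k}\binom{k}{i}\bigr)_{k}$; collecting the terms attached to a fixed $\lambda_{e}$ writes $(A^{k})_{ij}=\sum_{e=1}^{s}R_{e,ij}(k)\lambda_{e}^{k}$ with each $R_{e,ij}$ an integer combination of the binomial polynomials $\binom{X}{i}$. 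Because $\lambda_{1},\ldots,\lambda_{s}$ are distinct and nonzero, the sequences $(k^{r}\lambda_{e}^{k})_{k}$ are linearly independent, so the $R_{e,ij}$ are forced and coincide with the $P_{e,ij}$ of the hypothesis. Substituting $t$ for $k$ replaces $\lambda_{e}^{k}$ by $\lambda_{e}^{t}=\e^{z_{e}t}$ (the branch implicit in Theorem~\ref{thm,22}) and $\binom{k}{i}$ by the polynomial $\binom{t}{i}$; since this substitution commutes with the grouping by $\lambda_{e}$, we get $A(t)_{ij}=\sum_{e=1}^{s}P_{e,ij}(t)\,\e^{z_{e}t}$.

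Then I would differentiate once. For each $e$ the derivative of $P_{e,ij}(t)\,\e^{z_{e}t}$ at $t=0$ is $P'_{e,ij}(0)+z_{e}P_{e,ij}(0)$, the Taylor coefficients of $P_{e,ij}$ of order $\ge 2$ contributing nothing because the derivative at $0$ of $t^{r}\e^{z_{e}t}$ vanishes for $r\ge 2$; hence
$$A'(0)_{ij}=\sum_{e=1}^{s}\bigl(P'_{e,ij}(0)+z_{e}P_{e,ij}(0)\bigr).$$
Reading off $P_{e,ij}(0)=p_{e,ij,0}$ and $P'_{e,ij}(0)=p_{e,ij,1}$ identifies this with the matrix in the statement, which is therefore a logarithm of $A$ by Theorem~\ref{thm,22}(1). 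The one step I expect to need care is the first: making sure that ``plug $t$ for $k$ in the $\mathcal{P}$-cf'' followed by ``collect the part belonging to each eigenvalue $\lambda_{e}$'' really yields the smooth functions $P_{e,ij}(t)\,\e^{z_{e}t}$ of the hypothesis. This rests on the uniqueness of the representation $(A^{k})_{ij}=\sum_{e}P_{e,ij}(k)\lambda_{e}^{k}$, i.e. on the linear independence of the $(k^{r}\lambda_{e}^{k})_{k}$ over distinct nonzero $\lambda_{e}$; once that is in place, the differentiation and the bookkeeping of coefficients are routine.
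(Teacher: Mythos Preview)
Your approach is exactly the paper's: the corollary is recorded there as an immediate consequence of Theorem~\ref{thm,22}(1), and you simply carry out that entrywise computation of $A'(0)$. One caveat worth flagging: your (correct) differentiation yields $A'(0)_{ij}=\sum_{e}p_{e,ij,1}+\sum_{e}p_{e,ij,0}\,z_{e}$, which is \emph{not} literally the formula displayed in the corollary (the subscripts $0$ and $1$ on the $p$'s are interchanged there); the worked $2\times 2$ example with the matrix $C$ later in the paper agrees with your version, so the mismatch is a typo in the stated formula rather than a defect in your argument.
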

\proof
The proof Follows immediately from property $(1)$ of Theorem~\ref{thm,22}.
\endproof
The following theorem is also a consequence of Theorem~\ref{thm,22}.
\begin{thrm}\label{thm, 2321} Let $A$ be a nonsingular complex matrix such that $\chi_{A}\in \mathbb{R}[X]$. Let
\begin{equation*}
\pmb{A}=N(A) + \mathcal{A}_{0}\Gamma^{0} + \cdots + \mathcal{A}_{l}\Gamma^{l}
\end{equation*}
be the $\mathcal{P}$-cf relative to $(\mathcal{S}^{\ast}, \mathcal{H})$ of $A$.
Suppose that the sequences appearing in $\mathcal{A}_{0},\ldots,\mathcal{A}_{l}$ are
\begin{equation*}
r_{i}^{k}\cos(k\theta) \quad(\text{or}\quad r_{i}^{k}\sin(k\theta)), 1\leq i\leq s, \quad\text{and}\quad \pmb{\lambda}_{i}, 1\leq i\leq m
\end{equation*}
Then we have
\begin{enumerate}[(1)]
\item $\mu_{1}=r_{1}\e^{\mathrm{i}\theta_{1}},\ldots,\mu_{s}=r_{1}\e^{\mathrm{i}\theta_{s}},\overline{\mu}_{1}=r_{1}\e^{-\mathrm{i}\theta_{1}},
\ldots,\overline{\mu}_{s}=r_{s}\e^{-\mathrm{i}\theta_{s}},\lambda_{1},\ldots,\lambda_{m}$ are the eigenvalues, not necessarily distinct, of $A$.
\item Let $z_{1},\ldots,z_{m}$ be logarithms of $\lambda_{1},\ldots,\lambda_{m}$, $w_{1},\ldots,w_{s}$ be logarithms of $\mu_{1},\ldots,\mu_{s}$ and $u_{1},\ldots,u_{s}$ be logarithms of $\overline{\mu}_{1},\ldots,\overline{\mu}_{s}$. Then the $\mathcal{P}$-cf relative to $(\mathcal{S}^{\ast}, \mathcal{T})$ of the logarithm $A'(0)$ of $A$ can be easily obtained by transforming the
    $\mathcal{P}$-cf relative to $(\mathcal{S}^{\ast}, \mathcal{H})$ of $A$ using the following substitutions:
\renewcommand{\arraystretch}{1.5}
$$\begin{array}{ccccc}
\Gamma^{j}(r_{i}^{k}\cos(k\theta_{i}))_{k}&\hookrightarrow & \frac{1}{2}(g_{(j,k)}(w_{i})+g_{(j,k)}(u_{i}))\\
\Gamma^{j}(r_{i}^{k}\sin(k\theta_{i}))_{k}&\hookrightarrow & \frac{1}{2\mathrm{i}}(g_{(j,k)}(w_{i})-g_{(j,k)}(u_{i}))\\
\pmb{\lambda_{j}}\Gamma^{i}&\hookrightarrow & i!\pmb{z_{j}}z_{j}^{-i}\Lambda_{i}&\text{if}\quad z_{j}\neq 0\\
\pmb{\lambda_{j}}\Gamma^{i}&\hookrightarrow & i!\pmb{0}_{i}&\text{if}\quad z_{j}=0
\end{array}$$
where for all $z\in \mathbb{C}$,
\begin{eqnarray*}
g_{(j,k)}(z)=\begin{cases}
j!\Lambda_{j}\pmb{z}z^{-j} & \text{if}\quad z\neq 0\\
j!\pmb{0}_{j} &\text{if}\quad z=0
\end{cases}
\end{eqnarray*}

\item $z_{1},\ldots,z_{m},w_{1},\ldots,w_{s},u_{1},\ldots,u_{s}$ are the eigenvalues, not necessarily distinct, of $A'(0)$.
\end{enumerate}
\end{thrm}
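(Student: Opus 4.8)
The plan is to deduce everything from Theorem~\ref{thm,22} by complexifying the given real $\mathcal{P}$-cf of $A$. Because $\chi_{A}\in\mathbb{R}[X]$, the non-real spectrum of $A$ comes in conjugate pairs, and the real sequences $r_{i}^{k}\cos(k\theta_{i})$ and $r_{i}^{k}\sin(k\theta_{i})$ occurring in the $\mathcal{P}$-cf of $A$ relative to $(\mathcal{S}^{\ast},\mathcal{H})$ are nothing but the real and imaginary parts of the complex geometric sequences $\pmb{\mu_{i}}=(\mu_{i}^{k})_{k\geq0}$, $\mu_{i}=r_{i}\e^{\mathrm{i}\theta_{i}}$.

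First I would rewrite the hypothesis
\[
\pmb{A}=N(A)+\mathcal{A}_{0}\Gamma^{0}+\cdots+\mathcal{A}_{l}\Gamma^{l}
\]
by substituting $r_{i}^{k}\cos(k\theta_{i})=\tfrac{1}{2}(\mu_{i}^{k}+\overline{\mu}_{i}^{k})$ and $r_{i}^{k}\sin(k\theta_{i})=\tfrac{1}{2\mathrm{i}}(\mu_{i}^{k}-\overline{\mu}_{i}^{k})$. This expresses $\pmb{A}$ as a linear combination, with matrix coefficients over $\mathbb{C}$, of the geometric sequences $\pmb{\mu_{i}},\pmb{\overline{\mu}_{i}}$ ($1\leq i\leq s$) and $\pmb{\lambda_{i}}$ ($1\leq i\leq m$) multiplied by powers of $\Gamma$. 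By the uniqueness of the $\mathcal{P}$-cf over $\mathbb{C}$ relative to $(\mathcal{S}^{\ast},\mathcal{H})$, this is precisely the $\mathcal{P}$-cf of $A$ regarded as a complex matrix. Reading the minimal polynomial off this $\mathcal{P}$-cf (Lemma~\ref{lem 325}, equivalently Corollary $3.6$ of~\cite{Mouc1}) yields assertion~(1): the eigenvalues of $A$, listed not necessarily distinctly, are the $\mu_{i}$, the $\overline{\mu}_{i}$ and the $\lambda_{i}$.

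Next I would apply Theorem~\ref{thm,22}(2) to this complex $\mathcal{P}$-cf. Each substitution there acts on a single summand $\pmb{\nu}\Gamma^{j}$ through a chosen logarithm of $\nu$ only; taking $w_{i}$ for $\mu_{i}$, $u_{i}$ for $\overline{\mu}_{i}$ and $z_{i}$ for $\lambda_{i}$, we get $\pmb{\mu_{i}}\Gamma^{j}\hookrightarrow g_{(j,k)}(w_{i})$, $\pmb{\overline{\mu}_{i}}\Gamma^{j}\hookrightarrow g_{(j,k)}(u_{i})$ and $\pmb{\lambda_{i}}\Gamma^{j}\hookrightarrow j!\pmb{z_{i}}z_{i}^{-j}\Lambda_{j}$ (or $j!\pmb{0}_{j}$ when $z_{i}=0$), since $g_{(j,k)}(z)$ is exactly the sequence $j!\pmb{z}z^{-j}\Lambda_{j}$ produced by Theorem~\ref{thm,22}. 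Recombining the conjugate pairs gives
\[
\Gamma^{j}(r_{i}^{k}\cos(k\theta_{i}))_{k}=\tfrac{1}{2}\bigl(\pmb{\mu_{i}}\Gamma^{j}+\pmb{\overline{\mu}_{i}}\Gamma^{j}\bigr)\hookrightarrow\tfrac{1}{2}\bigl(g_{(j,k)}(w_{i})+g_{(j,k)}(u_{i})\bigr),
\]
and analogously $\Gamma^{j}(r_{i}^{k}\sin(k\theta_{i}))_{k}\hookrightarrow\tfrac{1}{2\mathrm{i}}\bigl(g_{(j,k)}(w_{i})-g_{(j,k)}(u_{i})\bigr)$, which is exactly the table of assertion~(2). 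Finally, assertion~(3) is Theorem~\ref{thm,22}(3) applied to the complexified $A$: that part identifies $A'(0)$ as the logarithm of $A$ whose characteristic polynomial is obtained from $\chi_{A}$ by replacing each eigenvalue $\e^{z}$ by the selected logarithm $z$ with unchanged multiplicity, so the eigenvalues of $A'(0)$ are the $w_{i}$, the $u_{i}$ and the $z_{i}$.

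The only point requiring care — everything else is bookkeeping — is legitimizing the back-and-forth between the real and complex pictures: one must check that the cosine/sine-to-exponential rewriting really does land on the unique complex $\mathcal{P}$-cf of $A$ relative to $(\mathcal{S}^{\ast},\mathcal{H})$, so that Theorem~\ref{thm,22} (stated for complex matrices) applies term-by-term, and that the complex $\mathcal{P}$-cf of $A'(0)$ thus obtained genuinely reassembles into the real-looking substitution table, i.e. that $\tfrac{1}{2}(g_{(j,k)}(w_{i})+g_{(j,k)}(u_{i}))$ and $\tfrac{1}{2\mathrm{i}}(g_{(j,k)}(w_{i})-g_{(j,k)}(u_{i}))$ are admissible coefficients in a $\mathcal{P}$-cf. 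The degenerate branches $z_{i}=0$ (and, only in non-genuinely-complex situations, $w_{i}=0$ or $u_{i}=0$) are covered by the second case of each substitution and need no separate treatment.
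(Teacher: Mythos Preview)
Your proposal is correct and follows essentially the same approach as the paper: the paper's proof consists of the single sentence ``a straightforward application of Theorem~\ref{thm,22}, and hence will be omitted,'' and your complexification via $r_{i}^{k}\cos(k\theta_{i})=\tfrac{1}{2}(\mu_{i}^{k}+\overline{\mu}_{i}^{k})$, $r_{i}^{k}\sin(k\theta_{i})=\tfrac{1}{2\mathrm{i}}(\mu_{i}^{k}-\overline{\mu}_{i}^{k})$ followed by term-by-term application of Theorem~\ref{thm,22} is exactly the intended argument (it mirrors the proof the paper gives for the analogous exponential result, where these same identities are invoked explicitly).
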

\begin{proof}
The proof is a straightforward application of Theorem~\ref{thm,22}, and hence will be omitted.
\end{proof}
\section{Illustration examples}
In this section we give some theoretical and numerical illustrations of our methods.
\begin{example} We begin by the trivial and well known case of a nilpotent matrix. Suppose that $A^{q}=0$. Then we have $A^{k}=\sum_{i=0}^{q-1}\pmb{0}_{i}A^{i}$ and hence $\e^{kA}=\sum_{i=0}^{q-1}\frac{k^{i}}{i!}A^{i}$ in virtue of Theorem~\ref{thm,30}.
\end{example}
\begin{example}
Let \begin{equation*}
A=[a_{0},a_{1},\ldots,a_{n-1}]=
\begin{pmatrix}
a_{0}& a_{1}&. &.&a_{n-1}\\
0&a_{0}&a_{1}&.&a_{n-2}\\
.\;&\ddots &\ddots& \ddots& .\;\;\;\\
.\;& &\ddots& \ddots &a_{1}\\
0&. &. &0&a_{0}
\end{pmatrix}.
\end{equation*}
a semicirculant matrix. Using the method given in~\cite{Mouc0} together with the methods obtained in this paper, we can compute easily the exponential and the logarithm of $A$. For example let consider Example~$3.4$ of~\cite{Mouc0} $A=[2,4,2,3]$. We have $$A^{k}=[\pmb{a}_{0}(k), \pmb{a}_{1}(k), \pmb{a}_{2}(k), \pmb{a}_{3}(k)],$$ where
\begin{align*}
\pmb{a}_{0}(k)&=2^{k}\binom{k}{0}\\
\pmb{a}_{1}(k)&=2^{k+1}\binom{k}{1}\\
\pmb{a}_{2}(k)&=2^{k+2}\binom{k}{2}+2^{k}\binom{k}{1}\\
\pmb{a}_{3}(k)&=2^{k+3}\binom{k}{3}+2^{k+2}\binom{k}{2}+3\times2^{k-1}\binom{k}{1}
\end{align*}
Hence $\e^{kA}=[\pmb{b}_{0}(k),\pmb{b}_{1}(k),\pmb{b}_{2}(k),\pmb{b}_{3}(k)]$
where
\begin{align*}
\pmb{b}_{0}(k)&=\e^{2k}\\
\pmb{b}_{1}(k)&=4k\e^{2k}\\
\pmb{b}_{2}(k)&=(8k^{2}+2k)\e^{2k}\\
\pmb{a}_{3}(k)&=(\frac{2^{5}k^{3}}{3}+8k^{2}+3k)\e^{2k}
\end{align*}
On the other hand, we have
\begin{align*}
\pmb{a}_{0}(k)&=2^{k}\\
\pmb{a}_{1}(k)&=2^{k+1}k\\
\pmb{a}_{2}(k)&=2^{k+1}k^{2}-2^{k}k\\
\pmb{a}_{3}(k)&=\frac{2^{k+2}}{3}k^{3}-2^{k+1}k^{2}+\frac{13}{6}2^{k}k
\end{align*}
Then $A'(0)^{k}=[\pmb{c}_{0}(k),\pmb{c}_{1}(k),\pmb{c}_{2}(k),\pmb{c}_{3}(k)]$
where
\begin{align*}
\pmb{c}_{0}(k)&=\ln(2)^{k}\\
\pmb{c}_{1}(k)&=2\ln(2)^{k-1}\binom{k}{1}\\
\pmb{c}_{2}(k)&=2\ln(2)^{k-2}\binom{k}{2}-\ln(2)^{k-1}\binom{k}{1}\\
\pmb{c}_{3}(k)&=\frac{4}{3}\ln(2)^{k-3}\binom{k}{3}-2\ln(2)^{k-2}\binom{k}{2}+\frac{13}{6}\ln(2)^{k-1}\binom{k}{1}
\end{align*}
$A'(0)$ is the logarithm of $A$ with characteristic polynomial $(X-\ln(2))^{3}$.
Let $J_{n+1}(\lambda)=[\lambda,1,0,\ldots,0]$ the Jordan block of order $n+1$ with eigenvalue $\lambda\neq 0$. It is well-known that for all nonnegative integer $k$
$$J_{n+1}(\lambda)^{k}=[\lambda^{k},\lambda^{k-1}\binom{k}{1},\ldots,\lambda^{k-i}\binom{k}{i},\ldots,\lambda^{k-n}\binom{k}{n}]$$
Hence we get the following well-known result
$$\e^{kJ_{n+1}(\lambda)}=[\e^{k\lambda},k\e^{k\lambda},\ldots, \frac{\e^{k\lambda}k^{i}}{i!},\ldots,\frac{\e^{k\lambda}k^{n}}{n!}]$$
on the other hand, Since $$\binom{k}{i}=\sum_{m=0}^{i}\frac{s(i,m)}{i!}k^{m}$$ where $s(m,k)$ are the Stirling numbers of the first kind (see, e.g. Quaintance and Gould~\cite{Qua}), it follows that
$$\lambda^{k-i}\binom{k}{i}=\sum_{m=0}^{i}\frac{s(i,m)}{i!}\lambda^{-i}\lambda^{k}k^{m}$$
Hence
$\log(J_{n+1}(\lambda))^{k}=[d_{0}(k),\ldots,d_{n}(k)]$
\\where
$$
d_{i}(k)=\sum_{m=0}^{i}\frac{s(i,m)}{i!}m!\lambda^{-i}\ln(\lambda)^{k-m}\binom{k}{m}$$
In particular $$D=\ln(J_{n+1}(\lambda))=[\ln[\lambda),\lambda^{-1},\ldots,(-1)^{i-1}\frac{\lambda^{-i}}{i},\ldots,(-1)^{n-1}\frac{\lambda^{-n}}{n}]$$
where we have used the fact that $\frac{s(i,1)}{i!}=\frac{(-1)^{i-1}}{i}$ for all positive integer $i$. It is possible to verify that $J_{n+1}(\lambda)=\e^{D}$ through direct calculation, see e.g. Appendix F in~{Dick}. 
\end{example}
\begin{example}
Let $$A=\begin{pmatrix}
1&1&1&0\\1&1&1&-1\\0&0&-1&1\\0&0&1&-1
\end{pmatrix}$$
and
$$A(k)=\begin{pmatrix}
2^{-1+k}&2^{-1+k}&\frac{1}{16}2^{k}((-1)^{1+k}+5)&\frac{1}{16}2^{k}((-1)^{k}-1)\vspace*{0.1pc}\\
2^{-1+k}&2^{-1+k}&\frac{5}{16}2^{k}((-1)^{1+k}+1)&\frac{1}{16}2^{k}(5(-1)^{k}-1)\vspace*{0.1pc}\\
0&0&(-1)^{k}2^{-1+k}&(-1)^{1+k}2^{-1+k}\vspace*{0.1pc}\\
0&0&(-1)^{1+k}2^{-1+k}&(-1)^{k}2^{-1+k}
\end{pmatrix}.$$
Then we have
\begin{itemize}
\item The non-geometric part of $A$ is $(I_{4}-A(0))\pmb{0}_{0}+(A-A(1))\pmb{0}_{1}$.
\item The geometric part of $A$ is
$$\begin{pmatrix}
2^{-1}(2^{k})&2^{-1}(2^{k})&\frac{5}{16}(2^{k})-\frac{1}{16}((-2)^{k})&\frac{-1}{16}(2^{k})+\frac{1}{16}((-2)^{k})\vspace*{0.1pc}\\
2^{-1}(2^{k})&2^{-1}(2^{k})&\frac{5}{16}(2^{k})-\frac{5}{16}((-2)^{k})&\frac{-1}{16}(2^{k})+\frac{5}{16}((-2)^{k})\vspace*{0.1pc}\\
0&0&2^{-1}((-2)^{k})&-2^{-1}((-2)^{k})\vspace*{0.1pc}\\
0&0&-2^{-1}((-2)^{k})&2^{-1}((-2)^{k})
\end{pmatrix}$$
\item For all $k\in \mathbb{Z}$
\begin{eqnarray*}
\e^{ktA}&=&I_{4}-A(0)+(A-A(1))kt+\begin{pmatrix}
\frac{\e^{2kt}}{2}&\frac{\e^{2kt}}{2}&\frac{5\e^{2kt}-\e^{-2kt}}{16}&
\frac{-\e^{2kt}+\e^{-2kt}}{16}\vspace*{0.1pc}\\
\frac{\e^{2kt}}{2}&\frac{\e^{2kt}}{2}&\frac{5\e^{2kt}-5\e^{-2kt}}{16}&\frac{-\e^{2kt}+
5\e^{-2kt}}{16}\vspace*{0.1pc}\\
0&0&\frac{\e^{-2kt}}{2}&\frac{-\e^{-2kt}}{2}\vspace*{0.1pc}\\
0&0&\frac{-\e^{-2kt}}{2}&\frac{\e^{-2kt}}{2}
\end{pmatrix}
\end{eqnarray*}
Since \begin{eqnarray*}
(A-A(1))&=&\begin{pmatrix} 0&0&\frac{1}{4}&\frac{1}{4}\\
0&0&\frac{-1}{4}&\frac{-1}{4}\\
0&0&0&0\\0&0&0&0
\end{pmatrix}\\
I_{4}-A(0)&=&\begin{pmatrix} \frac{1}{2}&\frac{-1}{2}&\frac{-1}{4}&0\\
\frac{-1}{2}&\frac{1}{2}&0&\frac{-1}{4}\\
0&0&\frac{1}{2}&\frac{1}{2}\\0&0&\frac{1}{2}&\frac{1}{2}
\end{pmatrix}
\end{eqnarray*}
we have for all $k\in \mathbb{Z}$
\begin{equation*}
\e^{ktA}=\begin{pmatrix}
\frac{\e^{2kt}+1}{2}&\frac{\e^{2kt}-1}{2}&\frac{5\e^{2kt}-\e^{-2kt}+4kt-4}{16}&
\frac{-\e^{2kt}+\e^{-2kt}+4kt}{16}\vspace*{0.1pc}\\
\frac{\e^{2kt}-1}{2}&\frac{\e^{2kt}+1}{2}&\frac{5\e^{2kt}-5\e^{-2kt}-4kt}{16}&\frac{-\e^{2kt}+
5\e^{-2kt}-4kt-4}{16}\vspace*{0.1pc}\\
0&0&\frac{\e^{-2kt}+1}{2}&\frac{-\e^{-2kt}+1}{2}\vspace*{0.1pc}\\
0&0&\frac{-\e^{-2kt}+1}{2}&\frac{\e^{-2kt}+1}{2}
\end{pmatrix}
\end{equation*}
\end{itemize}
\end{example}
\begin{example}
Let
$$E=\begin{pmatrix}
2\sqrt{3}-10&2\sqrt{3}-23&\sqrt{3}-5\\4&\sqrt{3}+9&2\\
-2\sqrt{3}+2&-4\sqrt{3}+5&-\sqrt{3}+1
\end{pmatrix}$$
and put
$$E(k)=
\begin{pmatrix}
e_{11}(k)&e_{12}(k)&e_{13}(k)\\e_{21}(k)&e_{22}(k)&e_{23}(k)\\e_{31}(k)&e_{32}(k)&e_{33}(k)
\end{pmatrix}$$
where
\begin{eqnarray*}
e_{11}(k)&=&2^{k+1}(\cos(\frac{k\pi}{6})-5\sin(\frac{k\pi}{6}))\\
e_{12}(k)&=&2^{k+1}(\cos(\frac{k\pi}{6})-\frac{23}{2}\sin(\frac{k\pi}{6}))\\
e_{13}(k)&=&2^{k}(\cos(\frac{k\pi}{6})-5\sin(\frac{k\pi}{6}))\\
e_{21}(k)&=&2^{k+2}\sin(\frac{k\pi}{6})\\
e_{22}(k)&=&2^{k}(\cos(\frac{k\pi}{6})+9\sin(\frac{k\pi}{6}))\\
e_{23}(k)&=&2^{k+1}\sin(\frac{k\pi}{6})\\
e_{31}(k)&=&-2^{k+1}(\cos(\frac{k\pi}{6})-\sin(\frac{k\pi}{6}))
\end{eqnarray*}
\begin{eqnarray*}
e_{32}(k)&=&-2^{k+2}(\cos(\frac{k\pi}{6})-\frac{5}{4}\sin(\frac{k\pi}{6}))\\
e_{33}(k)&=&-2^{k}(\cos(\frac{k\pi}{6})-\sin(\frac{k\pi}{6}))
\end{eqnarray*}
We have, for all $k\geq1$,
$E^{k}=E(k)$.
Hence For all $k\in \mathbb{Z}$
\begin{eqnarray*}
\e^{ktE}&=&I_{3}-E(0)+
\begin{pmatrix}
f_{11}(k,t)&f_{12}(k,t)&f_{13}(k,t)\\f_{21}(k,t)&f_{22}(k,t)&f_{23}(k,t)\\f_{31}(k,t)&f_{32}(k,t)&f_{33}(k,t)
\end{pmatrix}\\
&=&\begin{pmatrix}
f_{11}(k,t)-1&f_{12}(k,t)-2&f_{13}(k,t)-1\\f_{21}(k,t)&f_{22}(k,t)&f_{23}(k,t)\\f_{31}(k,t)+2&f_{32}(k,t)+4&f_{33}(k,t)+2
\end{pmatrix}
\end{eqnarray*}
where
\begin{eqnarray*}
f_{11}(k,t)&=&(1+5\mathrm{i})\e^{2tk\e^{\frac{\pi\mathrm{i}}{6}}}+(1-5\mathrm{i})\e^{2tk\e^{\frac{-\pi\mathrm{i}}{6}}}\\
f_{12}(k,t)&=&(1+\frac{23}{2}\mathrm{i})\e^{2tk\e^{\frac{\pi\mathrm{i}}{6}}}+
(1-\frac{23}{2}\mathrm{i})\e^{2tk\e^{\frac{-\pi\mathrm{i}}{6}}}\\
f_{13}(k,t)&=&\frac{1+5\mathrm{i}}{2}\e^{2tk\e^{\frac{\pi\mathrm{i}}{6}}}+
\frac{1-5\mathrm{i}}{2}\e^{2tk\e^{\frac{-\pi\mathrm{i}}{6}}}\\
f_{21}(k,t)&=&-2\mathrm{i}\e^{2tk\e^{\frac{\pi\mathrm{i}}{6}}}+2\mathrm{i}\e^{2tk\e^{\frac{-\pi\mathrm{i}}{6}}}\\
f_{22}(k,t)&=&\frac{1-9\mathrm{i}}{2}\e^{2tk\e^{\frac{\pi\mathrm{i}}{6}}}+
\frac{1+9\mathrm{i}}{2}\e^{2tk\e^{\frac{-\pi\mathrm{i}}{6}}}\\
f_{23}(k,t)&=&-\mathrm{i}\e^{2tk\e^{\frac{\pi\mathrm{i}}{6}}}+\mathrm{i}\e^{2tk\e^{\frac{-\pi\mathrm{i}}{6}}}\\
f_{31}(k,t)&=&(-1-\mathrm{i})\e^{2tk\e^{\frac{\pi\mathrm{i}}{6}}}+
(-1+\mathrm{i})\e^{2tk\e^{\frac{-\pi\mathrm{i}}{6}}}\\
f_{32}(k,t)&=&\frac{-4-5\mathrm{i}}{2}\e^{2tk\e^{\frac{\pi\mathrm{i}}{6}}}+
\frac{-4+5\mathrm{i}}{2}\e^{2tk\e^{\frac{-\pi\mathrm{i}}{6}}}\\
f_{33}(k,t)&=&\frac{-1-\mathrm{i}}{2}\e^{2tk\e^{\frac{\pi\mathrm{i}}{6}}}+
\frac{-1+\mathrm{i}}{2}\e^{2tk\e^{\frac{-\pi\mathrm{i}}{6}}}
\end{eqnarray*}
\end{example}
\begin{example}
Let $x$ be a nonzero complex number ant let
$$E=\begin{pmatrix}
2\sqrt{3}-x-10&2\sqrt{3}-2x-23&\sqrt{3}-x-5\\4&\sqrt{3}+9&2\\
-2\sqrt{3}+2x+2&-4\sqrt{3}+4x+5&-\sqrt{3}+2x+1
\end{pmatrix}$$
Put
$$E(k)=
\begin{pmatrix}
e_{11}(k)&e_{12}(k)&e_{13}(k)\\e_{21}(k)&e_{22}(k)&e_{23}(k)\\e_{31}(k)&e_{32}(k)&e_{33}(k)
\end{pmatrix}$$
where
\begin{eqnarray*}
e_{11}(k)&=&2^{k+1}(\cos(\frac{k\pi}{6})-5\sin(\frac{k\pi}{6}))-x^{k}\\
e_{12}(k)&=&2^{k+1}(\cos(\frac{k\pi}{6})-\frac{23}{2}\sin(\frac{k\pi}{6}))-2x^{k}\\
e_{13}(k)&=&2^{k}(\cos(\frac{k\pi}{6})-5\sin(\frac{k\pi}{6}))-x^{k}\\
e_{21}(k)&=&2^{k+2}\sin(\frac{k\pi}{6})\\
e_{22}(k)&=&2^{k}(\cos(\frac{k\pi}{6})+9\sin(\frac{k\pi}{6}))\\
e_{23}(k)&=&2^{k+1}\sin(\frac{k\pi}{6})\\
e_{31}(k)&=&-2^{k+1}(\cos(\frac{k\pi}{6})-\sin(\frac{k\pi}{6}))+2x^{k}\\
e_{32}(k)&=&-2^{k+2}(\cos(\frac{k\pi}{6})-\frac{5}{4}\sin(\frac{k\pi}{6}))+4x^{k}\\
e_{33}(k)&=&-2^{k}(\cos(\frac{k\pi}{6})-\sin(\frac{k\pi}{6}))+2x^{k}
\end{eqnarray*}
We have, for all $k\geq1$,
$E^{k}=E(k)$. Then for all positive integer $k$
\begin{enumerate}[1.]
\item The eigenvalues of the matrix $E$ are $2\e^{\mathrm{i}\frac{\pi}{6}}, 2\e^{-\mathrm{i}\frac{\pi}{6}}$ and $x$.
\item Let $B$ be the matrix such that
\begin{eqnarray*}
B^{k}=
\begin{pmatrix}
g_{11}(k,t)&g_{12}(k,t)&g_{13}(k,t)\\g_{21}(k,t)&g_{22}(k,t)&g_{23}(k,t)\\
g_{31}(k,t)&g_{32}(k,t)&g_{33}(k,t)
\end{pmatrix}
\end{eqnarray*}
where
\begin{eqnarray*}
g_{11}(k)&=&(1+5\mathrm{i})(\ln(2)+\frac{\pi\mathrm{i}}{6})^{k}+(1-5\mathrm{i})(\ln(2)-\frac{\pi\mathrm{i}}{6})^{k}-\ln(x)^{k}\\
g_{12}(k)&=&(1+\frac{23}{2}\mathrm{i})(\ln(2)+\frac{\pi\mathrm{i}}{6})^{k}+
(1-\frac{23}{2}\mathrm{i})(\ln(2)-\frac{\pi\mathrm{i}}{6})^{k}-2\ln(x)^{k}\\
g_{13}(k)&=&\frac{1+5\mathrm{i}}{2}(\ln(2)+\frac{\pi\mathrm{i}}{6})^{k}+
\frac{1-5\mathrm{i}}{2}(\ln(2)-\frac{\pi\mathrm{i}}{6})^{k}-\ln(x)^{k}\\
g_{21}(k)&=&-2\mathrm{i}(\ln(2)+\frac{\pi\mathrm{i}}{6})^{k}+2\mathrm{i}(\ln(2)-\frac{\pi\mathrm{i}}{6})^{k}\\
g_{22}(k)&=&\frac{1-9\mathrm{i}}{2}(\ln(2)+\frac{\pi\mathrm{i}}{6})^{k}+
\frac{1+9\mathrm{i}}{2}(\ln(2)-\frac{\pi\mathrm{i}}{6})^{k}\\
g_{23}(k)&=&-\mathrm{i}(\ln(2)+\frac{\pi\mathrm{i}}{6})^{k}+\mathrm{i}(\ln(2)-\frac{\pi\mathrm{i}}{6})^{k}\\
g_{31}(k)&=&(-2-\mathrm{i})(\ln(2)+\frac{\pi\mathrm{i}}{6})^{k}+
(-2+\mathrm{i})(\ln(2)-\frac{\pi\mathrm{i}}{6})^{k}+2\ln(x)^{k}
\end{eqnarray*}
\begin{eqnarray*}
g_{32}(k)&=&\frac{-4-5\mathrm{i}}{2}(\ln(2)+\frac{\pi\mathrm{i}}{6})^{k}+
\frac{-4+5\mathrm{i}}{2}(\ln(2)-\frac{\pi\mathrm{i}}{6})^{k}+4\ln(x)^{k}\\
g_{33}(k)&=&\frac{-2-\mathrm{i}}{2}(\ln(2)+\frac{\pi\mathrm{i}}{6})^{k}+
\frac{-2+\mathrm{i}}{2}(\ln(2)-\frac{\pi\mathrm{i}}{6})^{k}+2\ln(x)^{k}
\end{eqnarray*}
Then $B$ is a logarithm of $E$.
\item The eigenvalues of the matrix $B$ are $\ln(2)+\frac{\pi\mathrm{i}}{6}$, $\ln(2)-\frac{\pi\mathrm{i}}{6}$ and $\ln(x)$.
\item Suppose that $x$ is not a negative real number. Then if we replace the logarithm $\ln(x)$ by the principal logarithm of $x$ in the above expressions, we obtain that $B$ is the principal logarithm of $E$.
\end{enumerate}
\end{example}
\begin{example}
Let
$$
C=\begin{pmatrix}1&3\\-3&-5
\end{pmatrix}$$
It is easily seen that for all nonnegative integer $k$
$$
C^{k}=(-2)^{k}\begin{pmatrix}\frac{-3k}{2}+1&\frac{-3k}{2}\\\frac{3k}{2}&\frac{3k}{2}+1
\end{pmatrix}$$
Then
$$
C(t)=\e^{(\ln(2)+\mathrm{i}\pi)t}\begin{pmatrix}\frac{-3t}{2}+1&\frac{-3t}{2}\\\frac{3t}{2}&\frac{3t}{2}+1
\end{pmatrix}$$
Thus
$$
C'(0)=\begin{pmatrix}\frac{-3}{2}+\ln(2)+\mathrm{i}\pi&\frac{-3}{2}\\\frac{3}{2}&\frac{3}{2}+\ln(2)+\mathrm{i}\pi
\end{pmatrix}$$
is a logarithm of $C$.
\\From Theorem~\ref{thm,22} we we have also that
$$
C'(0)^{k}=(\ln(2)+\mathrm{i}\pi)^{k-1}\begin{pmatrix}\frac{-3k}{2}+\ln(2)+\mathrm{i}\pi&\frac{-3k}{2}\\
\frac{3k}{2}&\frac{3k}{2}+\ln(2)+\mathrm{i}\pi
\end{pmatrix}$$
\end{example}

\end{document}